\providecommand{\U}[1]{\protect\rule{.1in}{.1in}}
\newtheorem{theorem}{Theorem}
\newtheorem{definition}[theorem]{Definition}
\newtheorem{lemma}[theorem]{Lemma}
\newenvironment{proof}[1][Proof]{\noindent\textbf{#1.} }{\ \rule{0.5em}{0.5em}}
\begin{document}

\title{Approximation by Genuine $q$-Bernstein-Durrmeyer Polynomials in Compact Disks
in the case $q>1$}
\author{Nazim I. Mahmudov}
\date{}
\maketitle

\begin{abstract}
This paper deals with approximating properties of the newly defined
$q$-generalization of the genuine Bernstein-Durrmeyer polynomials in the case
$q>1$, whcih are no longer positive linear operators on $C[0,1]$. Quantitative
estimates of the convergence, the Voronovskaja type theorem and saturation of
convergence for complex genuine $q$-Bernstein-Durrmeyer polynomials attached
to analytic functions in compact disks are given. In particular, it is proved
that for functions analytic in $\left\{  z\in\mathbb{C}:\left\vert
z\right\vert <R\right\}  $, $R>q,$ the rate of approximation by the genuine
$q$-Bernstein-Durrmeyer polynomials ($q>1$) is of order $q^{-n}$ versus $1/n$
for the classical genuine Bernstein-Durrmeyer polynomials. We give explicit
formulas of Voronovskaja type for the genuine $q$-Bernstein-Durrmeyer for
$q>1$.

\end{abstract}

\section{Introduction}

In several recent papers, convergence properties of complex $q$-Bernstein
polynomials, proposed by Phillips \cite{phil}, attached to an analytic
function $f$ in closed disks, were intensively studied. Ostrovska \cite{ost1},
\cite{ost2}, and Wang and Wu \cite{wang}, \cite{wu} have investigated
convergence properies of $B_{n,q}$ in the case $q>1.$ In the case $q>1$, the
$q$-Bernstein polynomials are no longer positive operators, however, for a
function analytic in a disc $\mathbb{D}_{R}:=\left\{  z\in\mathbb{C}%
:\left\vert z\right\vert <R\right\}  ,\ R>q$, it was proved in \cite{ost1}
that the rate of convergence of $\left\{  B_{n,q}\left(  f;z\right)  \right\}
$ to $f\left(  z\right)  $ has the order $q^{-n}$ (versus $1/n$ for the
classical Bernstein polynomials). Moreover, Ostrovska \cite{ost2} obtained
Voronovskaya type theorem for monomials. If $q\geq1$ then qualitative
Voronovskaja-type and saturation results for complex $q$-Bernstein polynomials
were obtained in Wang-Wu \cite{wang}. Wu \cite{wu} studied saturation of
convergence on the interval $[0,1]$ for the $q$-Bernstein polynomials of a
continuous function $f$ for arbitrary fixed $q>1$.

Genuine Bernstein--Durrmeyer operators were first considered by Chen \cite{28}
and Goodman and Sharma \cite{74} around 1987. In recent years, the genuine
Bernstein--Durrmeyer operators have been investigated intensively by a number
of authors. Among the many articles written on the genuine
Bernstein--Durrmeyer operators, we mention here only the ones by Gonska and
etc \cite{gonska}, by Parvanov and Popov \cite{pp}, by Sauer \cite{sauer}, by
Waldron \cite{waldron}, and the book of P\'{a}lt\'{a}nea \cite{pal}.

On the other hand, Gal \cite{gal2} obtained quantitative estimates of the
convergence and of the Voronovskaja theorem in compact disks, for the complex
genuine Bernstein--Durrmeyer polynomials attached to analytic functions.
Besides, in other very recent papers, similar studies were done for complex
Bernstein-Durrmeyer operators in Anastassiou-Gal \cite{AG}, for complex
Bernstein-Durrmeyer operators based on Jacobi weights in Gal \cite{SG4}, for
complex genuine Bernstein-Durrmeyer operator in Gal \cite{SG3}, for complex
$q$-genuine Bernstein-Durrmeyer operator in Mahmudov \cite{mah3} and for other
kinds of complex Durrmeyer operators in Mahmudov \cite{mah4} and
Gal-Gupta-Mahmudov \cite{GGY}. Also, for the case $q>1,$ exact quantitative
estimates and quantitative Voronovskaja-type results for complex $q$-Lorentz
polynomials, $q$-Stancu polynomials, $q$-Stancu-Faber polynomials,
$q$-Bernstein-Faber polynomials, $q$-Kantorovich polynomials, $q$%
-Sz\'{a}sz-Mirakjan operators obtained by different researchers are collected
in the recent book of Gal \cite{galbook}.

In this paper we define the genuine $q$-Bernstein-Durrmeyer polynomials for
$q>1.$ Note that similar to the $q$-Bernstein operators the genuine
$q$-Bernstein-Durrmeyer operators in the case $q>1$ are not positive operators
on $C\left[  0,1\right]  $. The lack of positivity makes the investigation of
convergence in the case $q>1$ essentially more difficult than that for
$0<q<1$. We present upper estimates in approximation and we prove the
Voronovskaja type convergence theorem in compact disks in $\mathbb{C}$,
centered at origin, with quantitative estimate of this convergence. These
results allow us to obtain the exact degrees of approximation by complex
genuine $q$-Bernstein-Durrmeyer polynomials. Our results show that
approximation properties of the complex genuine $q$-Bernstein-Durrmeyer
polynomials are better than approximation properties of the complex
Bernstein-Durrmeyer polynomials considered in \cite{gal2}.

\section{Formulation}

Let $q>0.$ For any $n\in\mathbb{N}\cup\left\{  0\right\}  $, the $q$-integer
$\left[  n\right]  _{q}$ is defined by%
\[
\left[  n\right]  _{q}:=1+q+...+q^{n-1},\ \ \ \left[  0\right]  _{q}:=0;
\]
and the $q$-factorial $\left[  n\right]  _{q}!$ by%
\[
\left[  n\right]  _{q}!:=\left[  1\right]  _{q}\left[  2\right]
_{q}...\left[  n\right]  _{q},\ \ \ \ \left[  0\right]  _{q}!:=1.
\]
For integers $0\leq k\leq n$, the $q$-binomial is defined by%
\[
\left[
\begin{array}
[c]{c}%
n\\
k
\end{array}
\right]  _{q}:=\frac{\left[  n\right]  _{q}!}{\left[  k\right]  _{q}!\left[
n-k\right]  _{q}!}.
\]
For $q=1$ we obviously get $\left[  n\right]  _{q}=n$, $\left[  n\right]
_{q}!=n!$, $\left[
\begin{array}
[c]{c}%
n\\
k
\end{array}
\right]  _{q}=\left(
\begin{array}
[c]{c}%
n\\
k
\end{array}
\right)  .$ Moreover%
\[
\left(  1-z\right)  _{q}^{n}:=%
{\displaystyle\prod_{s=0}^{n-1}}
\left(  1-q^{s}z\right)  ,\ \ p_{n,k}\left(  q;z\right)  :=\left[
\begin{array}
[c]{c}%
n\\
k
\end{array}
\right]  _{q}z^{k}\left(  1-z\right)  _{q}^{n-k},\ \ z\in\mathbb{C}.
\]

For fixed $q>0,\ q\neq1$, we denote the $q$-derivative $D_{q}f\left(
z\right)  $ of $f$ by%
\[
D_{q}f\left(  z\right)  =\left\{
\begin{tabular}
[c]{lll}%
$\frac{f\left(  qz\right)  -f\left(  z\right)  }{\left(  q-1\right)  z},$ &
$z\neq0,$ & \\
&  & \\
$f^{\prime}\left(  0\right)  ,$ & $z=0.$ &
\end{tabular}
\ \ \ \right.
\]

The $q$-analogue of integration in the interval $[0,A]$ (see \cite{AAR}) is
defined by%
\[
\int_{0}^{A}f\left(  t\right)  d_{q}t:=A\left(  1-q\right)  \sum_{n=0}%
^{\infty}f\left(  Aq^{n}\right)  q^{n},\ \ \ 0<q<1.
\]
Let $\mathbb{D}_{R}$ be a disc $\mathbb{D}_{R}:=\left\{  z\in\mathbb{C}%
:\left\vert z\right\vert <R\right\}  $ in the complex plane $\mathbb{C}$.
Denote by $H\left(  \mathbb{D}_{R}\right)  $ the space of all analytic
functions on $\mathbb{D}_{R}$. For $f\in H\left(  \mathbb{D}_{R}\right)  $ we
assume that $f\left(  z\right)  =\sum_{m=0}^{\infty}a_{m}z^{m}$.

\begin{definition}
\label{def1}For $f:\left[  0,1\right]  \rightarrow\mathbb{C}$, the genuine
$q$-Bernstein-Durrmeyer operator is defined as follows:
\begin{equation}
U_{n,q}\left(  f;z\right)  :=\left\{
\begin{tabular}
[c]{ll}%
$f\left(  0\right)  p_{n,0}\left(  q;z\right)  +f\left(  1\right)
p_{n,n}\left(  q;z\right)  $ & \\
$+\left[  n-1\right]  _{q}%
{\displaystyle\sum_{k=1}^{n-1}}
q^{1-k}p_{n,k}\left(  q;z\right)  \int_{0}^{1}p_{n-2,k-1}\left(  q;qt\right)
f\left(  t\right)  d_{q}t,$ & $0<q<1,$\\
$f\left(  0\right)  p_{n,0}\left(  z\right)  +f\left(  1\right)
p_{n,n}\left(  z\right)  +\left(  n-1\right)
{\displaystyle\sum_{k=1}^{n-1}}
p_{n,k}\left(  z\right)  \int_{0}^{1}p_{n-2,k-1}\left(  t\right)  f\left(
t\right)  dt,$ & $q=1,$\\
$f\left(  0\right)  p_{n,0}\left(  q;z\right)  +f\left(  1\right)
p_{n,n}\left(  q;z\right)  $ & \\
$+\left[  n-1\right]  _{q^{-1}}%
{\displaystyle\sum_{k=1}^{n-1}}
q^{k-1}p_{n,k}\left(  q;z\right)  \int_{0}^{1}p_{n-2,k-1}\left(  q^{-1}%
;q^{-1}t\right)  f\left(  q^{k-n}t\right)  d_{q^{-1}}t,$ & $q>1,$%
\end{tabular}
\right.  \label{bd}%
\end{equation}
where for $n=1$ the sum is empty, i.e., equal to $0.$
\end{definition}

$U_{n,q}\left(  f;z\right)  $ are linear operators reproducing linear
functions and interpolating every function $f\in C\left[  0,1\right]  $ at $0$
and $1$. The genuine $q$-Bernstein-Durrmeyer operators are positive operators
on $C\left[  0,1\right]  $ for $0<q\leq1,$ and they are not positive for
$q>1.$ As a consequence, the cases $0<q\leq1$ and $q>1$ are not similar to
each other regarding the convergence. For $q\rightarrow1^{-}$ and
$q\rightarrow1^{+}$ we recapture the classical ($q=1$) genuine
Bernstein-Durrmeyer polynomials.

We start with the following quantitative estimates of the convergence for
complex $q$-Bernstein-Durrmeyer polynomials attached to an analytic function
in a disk of radius $R>1$ and center $0$.

\begin{theorem}
\label{t:convergence}Let $f\in H\left(  \mathbb{D}_{R}\right)  $, $1\leq
r<\dfrac{R}{q}$ and $q>1$. Then for all $\left\vert z\right\vert \leq r$ we
have
\[
\left\vert U_{n,q}\left(  f;z\right)  -f\left(  z\right)  \right\vert
\leq\frac{r\left(  1+r\right)  }{\left[  n+1\right]  _{q}}%
{\displaystyle\sum\limits_{m=2}^{\infty}}
\left\vert a_{m}\right\vert m\left(  m-1\right)  q^{m-2}r^{m-2}.
\]

\end{theorem}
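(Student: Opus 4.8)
The natural strategy is to reduce the problem to monomials. Writing $f(z)=\sum_{m=0}^\infty a_m z^m$ and setting $e_m(t)=t^m$, one expects $U_{n,q}$ to act on this series termwise, so that $U_{n,q}(f;z)=\sum_{m=0}^\infty a_m U_{n,q}(e_m;z)$ uniformly on $|z|\le r$; this interchange must be justified from the known bound $|U_{n,q}(e_m;z)|\le q^{?}r^m$ (or similar) which should follow from the explicit $q>1$ formula in Definition~\ref{def1} together with the fact that $U_{n,q}$ reproduces linear functions and interpolates at $0$ and $1$. Granting this, the whole theorem follows from the single monomial estimate
\[
\bigl|U_{n,q}(e_m;z)-z^m\bigr|\le \frac{r(1+r)\,m(m-1)q^{m-2}r^{m-2}}{[n+1]_q}\qquad(|z|\le r,\ m\ge2),
\]
since the $m=0$ and $m=1$ terms vanish (linear functions are reproduced) and summing against $|a_m|$ gives exactly the asserted right-hand side.

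To prove the monomial estimate I would first compute $U_{n,q}(e_m;z)$ in closed form, or at least establish a recurrence in $m$. The cleanest route is to evaluate $\int_0^1 p_{n-2,k-1}(q^{-1};q^{-1}t) e_m(q^{k-n}t)\,d_{q^{-1}}t$ using the $q$-Beta-type integral, which should yield $U_{n,q}(e_m;z)=\sum_{k=0}^{m} c_{m,k}(n,q) z^k$ with coefficients expressible through $q$-integers; in particular $U_{n,q}(e_m;\cdot)$ is a polynomial of degree $m$ (for $m\le n$) whose leading behaviour is governed by ratios like $[n+1]_q[n]_q\cdots$ in numerator and denominator. From such a formula (or from a $q$-analogue of the classical identity $U_n(e_{m+1};z)=\text{(something)}\,z\,D U_n(e_m;z)+\dots$), one derives a recurrence of the shape
\[
U_{n,q}(e_{m+1};z)-z^{m+1}=A_{n,q}(z)\,D_q\bigl(U_{n,q}(e_m;\cdot)\bigr)(z)+z\bigl(U_{n,q}(e_m;z)-z^m\bigr)+\text{(lower order)},
\]
and then estimates the error $E_{n,m}(z):=U_{n,q}(e_m;z)-z^m$ by induction on $m$, controlling $|D_q E_{n,m}|$ on a slightly smaller disk via a Bernstein–Markov / Cauchy-type inequality for $q$-derivatives of polynomials of degree $\le m$.

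The main obstacle, and the place where the lack of positivity bites, is getting the factor $1/[n+1]_q$ with the \emph{correct} $m$-dependence $m(m-1)q^{m-2}r^{m-2}$ and no hidden growth in $n$. For $0<q\le1$ positivity gives easy bounds on $U_{n,q}(e_m;z)$ for real arguments which then extend to the disk; for $q>1$ one must instead track cancellations in the explicit coefficient formula and show that each coefficient $c_{m,k}(n,q)$ satisfies $|c_{m,k}(n,q)-[k=m]|\lesssim q^{m}/[n+1]_q$ with the combinatorial $\binom{m}{2}$ factor emerging from the telescoping of the recurrence. Concretely, I expect the heart of the argument to be an inequality of the form $\bigl|\,[n-1]_{q^{-1}}\,q^{k-1}\!\int_0^1 p_{n-2,k-1}(q^{-1};q^{-1}t)(q^{k-n}t)^m d_{q^{-1}}t - (\text{target coefficient})\,\bigr|$ bounded by $C\,q^{m}/[n+1]_q$, proved by writing the $q^{-1}$-integral as a quotient of $q$-factorials and expanding $\frac{[n-1]_{q^{-1}}![m+k-1]_{q^{-1}}!}{[k-1]_{q^{-1}}![n+m-1]_{q^{-1}}!}$ in powers of $q^{-1}$, and then comparing with $1$. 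Once this coefficient bound is in hand, the inductive estimate on $E_{n,m}(z)$, the Bernstein–Markov inequality for the $q$-derivative, and the termwise summation are all routine, completing the proof of Theorem~\ref{t:convergence}.
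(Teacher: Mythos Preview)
Your outline is broadly on target and matches the paper's route: reduce to monomials via $U_{n,q}(f;z)=\sum_m a_m U_{n,q}(e_m;z)$, derive a first-order recurrence in $m$ for $U_{n,q}(e_m;z)$, control the $q$-derivative term by a Bernstein--Markov inequality, and close by induction on $m$. Two points, however, deserve correction.

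First, the ingredient you flag as uncertain, namely a bound $|U_{n,q}(e_m;z)|\le q^{?}r^m$, is in fact the crux, and the exponent on $q$ must be zero. The paper obtains $|U_{n,q}(e_m;z)|\le r^m$ (no $q$-growth) by writing $[k]_q[k+1]_q\cdots[k+m-1]_q=\sum_{s=1}^m S_q(m,s)[k]_q^s$ with positive $q$-Stirling-type coefficients, so that $U_{n,q}(e_m;z)$ is a positive combination of the $B_{n,q}(e_s;z)$; the normalising identity $\sum_s S_q(m,s)[n]_q^s\,\tfrac{[n-1]_q!}{[n+m-1]_q!}=1$ follows from interpolation at $z=1$, and together with $|B_{n,q}(e_s;z)|\le r^s$ this gives $|U_{n,q}(e_m;z)|\le r^m$. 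This clean bound is what replaces positivity in the $q>1$ regime; the coefficient-by-coefficient cancellation analysis you propose as ``the heart of the argument'' is unnecessary and would be considerably more laborious.

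Second, in the recurrence the $D_q$ falls on $U_{n,q}(e_{m-1};\cdot)$ itself, not on the error $E_{n,m-1}$. Hence the Bernstein inequality is applied to the full polynomial on the disk of radius $qr$, using the bound above to get $\|U_{n,q}(e_{m-1})\|_{qr}\le (qr)^{m-1}$; this is exactly where the factor $q^{m-2}$ in the final estimate originates. If you instead tried to bound $D_q E_{n,m-1}$ by induction, you would need $E_{n,m-1}$ on the larger disk $|z|\le qr$, and the induction would not close with the stated constants. With these two adjustments your sketch becomes the paper's proof.
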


Theorem \ref{t:convergence} says that for functions analytic in $\mathbb{D}%
_{R}$, $R>q,$ the rate of approximation by the genuine $q$-Bernstein-Durrmeyer
polynomials ($q>1$) is of order $q^{-n}$ versus $1/n$ for the classical
genuine Bernstein-Durrmeyer polynomials, see \cite{gal2}.

The Voronovskaja theorem for the real case with a quantitative estimate is
obtained by Gonska, Pi\c{t}ul and Ra\c{s}a \cite{gonska2} in the following
form:%
\[
\left\vert U_{n}\left(  f;x\right)  -f\left(  x\right)  -\frac{x\left(
1-x\right)  }{n+1}f^{\prime\prime}\left(  z\right)  \right\vert \leq
\frac{x\left(  1-x\right)  }{n+1}\omega\left(  f^{\prime\prime}\frac{2}%
{3\sqrt{n+3}}\right)  ,
\]
for all $n\in\mathbb{N},\ 0\leq x\leq1.$ For the complex genuine
$q$-Bernstein-Durrmeyer ( $0<q\leq1$) a quantitative estimate is obtained by
Gal \cite{gal2}( $q=1$) and Mahmudov \cite{mah3}($0<q<1$) in the following
form:%
\[
\left\vert U_{n,q}\left(  f;z\right)  -f\left(  z\right)  -\frac{z\left(
1-z\right)  }{\left[  n+1\right]  _{q}}f^{\prime\prime}\left(  z\right)
\right\vert \leq\frac{M_{r,f}}{\left[  n\right]  _{q}^{2}},\ \ 0<q\leq1,
\]
for all $n\in\mathbb{N},\ \left\vert z\right\vert \leq r$.

To formulate and prove the Voronovskaja type theorem with a quantitative
estimate in the case $q>1$ we introduce a function $L_{q}\left(  f;z\right)  $.

Let $R>q\geq1$ and let $f\in H\left(  \mathbb{D}_{R}\right)  $. For
$\left\vert z\right\vert <R/q^{2}$, we define%
\begin{equation}
L_{q}\left(  f;z\right)  :=\frac{\left(  1-z\right)  q\left(  D_{q}f\left(
z\right)  -D_{q^{-1}}f\left(  z\right)  \right)  }{q-1}\ \ \ \ \ \text{for
}q>1 \label{lq}%
\end{equation}
and for $0<q\leq1$,%
\[
L_{q}\left(  f;z\right)  =L_{1}\left(  f;z\right)  :=f^{\prime\prime}\left(
z\right)  z\left(  1-z\right)  .
\]

The next theorem gives Voronovskaja type result in compact disks, for complex
$q$-Bernstein-Durrmeyer polynomials attached to an analytic function in
$\mathbb{D}_{R}$, $R>q^{2}>1$ and center $0$ in terms of the function
$L_{q}\left(  f;z\right)  $.

\begin{theorem}
\label{t:voronovskaja}Let $f\in H\left(  \mathbb{D}_{R}\right)  $, $1\leq
r<\dfrac{R}{q^{2}}$ and $q>1$. The following Voronovskaja-type result holds%
\[
\left\vert U_{n,q}\left(  f;z\right)  -f\left(  z\right)  -\frac{1}{\left[
n+1\right]  _{q}}L_{q}\left(  f;z\right)  \right\vert \leq\frac{4r^{2}\left(
1+r\right)  ^{2}}{\left[  n+1\right]  _{q}^{2}}%
{\displaystyle\sum\limits_{m=3}^{\infty}}
\left\vert a_{m}\right\vert \left(  m-1\right)  ^{2}\left(  m-2\right)
^{2}\left(  q^{2}r\right)  ^{m-2}.
\]
for all $n\in\mathbb{N}$, $\left\vert z\right\vert \leq r.$
\end{theorem}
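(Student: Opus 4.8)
The plan is to reduce the estimate to monomials and then close an induction on the degree by means of a recurrence for the moments $U_{n,q}(e_m;z)$, where $e_m(z):=z^m$.

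\emph{Step 1: reduction to monomials.} Since $f(z)=\sum_{m=0}^{\infty}a_m e_m(z)$ converges uniformly on compact subsets of $\mathbb{D}_R$ and all evaluations occurring in \eqref{bd} take place on $[0,1]$ or on $\{|z|\le r\}\subset\mathbb{D}_R$, one may apply $U_{n,q}$ termwise: $U_{n,q}(f;z)=\sum_{m=0}^{\infty}a_m U_{n,q}(e_m;z)$. Using $[m]_{q^{-1}}=q^{1-m}[m]_q$ one computes
\[
L_q(e_m;z)=\frac{[m]_q[m-1]_q}{q^{m-2}}\left(z^{m-1}-z^m\right),
\]
and a direct calculation gives $U_{n,q}(e_0;z)=1$, $U_{n,q}(e_1;z)=z$ and $U_{n,q}(e_2;z)=e_2(z)+\frac{1}{[n+1]_q}L_q(e_2;z)$, so the terms $m=0,1,2$ contribute nothing. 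Hence it suffices to prove, for every $m\ge3$ and $|z|\le r$,
\[
\left|U_{n,q}(e_m;z)-e_m(z)-\frac{1}{[n+1]_q}L_q(e_m;z)\right|\le\frac{4r^2(1+r)^2}{[n+1]_q^2}\,(m-1)^2(m-2)^2\left(q^2r\right)^{m-2}.
\]

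\emph{Step 2: a recurrence for the moments.} Write $\pi_{n,m}(z):=U_{n,q}(e_m;z)$. In the $q>1$ branch of \eqref{bd} the boundary terms reduce, for $m\ge1$, to $p_{n,n}(q;z)$, while each middle integral equals $q^{m(k-n)}\int_0^1 p_{n-2,k-1}(q^{-1};q^{-1}t)\,t^m\,d_{q^{-1}}t$, a $q^{-1}$-Beta integral that evaluates in closed form as a ratio of $q^{-1}$-factorials. Summing over $k$ and simplifying shows that $\pi_{n,m}$ is a polynomial of degree $m$ with $\pi_{n,m}(1)=1$, and yields, after simplification, a recurrence of the form
\[
\pi_{n,m+1}(z)=\alpha_{n,m}(z)\,\pi_{n,m}(z)+\beta_{n,m}(z)\,D_q\pi_{n,m}(z),
\]
with explicit rational coefficients $\alpha_{n,m},\beta_{n,m}$, bounded for $|z|\le r$ by constants carrying $q$-powers no worse than $q^m$, and satisfying $\alpha_{n,m}(z)=z+O(1/[n+1]_q)$ and $\beta_{n,m}(z)=\frac{z(1-z)}{[n+1]_q}\bigl(1+O(1/[n+1]_q)\bigr)$.

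\emph{Step 3: induction and summation.} Put $E_{n,m}(z):=\pi_{n,m}(z)-e_m(z)-\frac{1}{[n+1]_q}L_q(e_m;z)$, so $E_{n,0}=E_{n,1}=E_{n,2}\equiv0$. Substituting the recurrences for $\pi_{n,m+1}$ and $\pi_{n,m}$, using $D_qe_m=[m]_qe_{m-1}$ and the monomial estimate $|\pi_{n,m}(z)-e_m(z)|\le\frac{r(1+r)}{[n+1]_q}m(m-1)q^{m-2}r^{m-2}$ underlying Theorem \ref{t:convergence}, one obtains for $|z|\le r$ an inequality of the shape
\[
|E_{n,m+1}(z)|\le r\,|E_{n,m}(z)|+\frac{r(1+r)}{[n+1]_q}\,\|D_qE_{n,m}\|_r+\frac{C(1+r)^2m^2(m-1)^2(q^2r)^{m-1}}{[n+1]_q^2},
\]
where $C$ is an absolute constant and $\|g\|_r:=\max_{|z|\le r}|g(z)|$; the middle term is controlled by the induction hypothesis together with a Bernstein-type bound for $D_q$ acting on polynomials of degree $m$. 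An induction on $m$, with the accumulated constants absorbed into the factor $4$, then yields the displayed monomial estimate. Multiplying by $|a_m|$, summing over $m\ge3$, and noting that $\sum_{m\ge3}|a_m|(m-1)^2(m-2)^2(q^2r)^{m-2}<\infty$ because $q^2r<R$ and $\limsup_m|a_m|^{1/m}\le1/R$, completes the proof.

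The main obstacle is Step 2 together with making the induction in Step 3 close with exactly the stated constants: the rescaling $f(q^{k-n}t)$ and the $d_{q^{-1}}t$-integration make the $q$-Beta evaluation and the subsequent algebra heavy, and the $q$-powers must be tracked so that the geometric growth is governed precisely by $q^2r$ --- which is what forces the hypothesis $r<R/q^2$ --- while the factor $\frac{1}{[n+1]_q}$ coming from the $L_q$-term and the factor $\frac{1}{[n+1]_q^2}$ of the remainder remain cleanly separated; one must also use the exact vanishing $E_{n,0}=E_{n,1}=E_{n,2}\equiv0$ rather than crude bounds on those low-order terms.
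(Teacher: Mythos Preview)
Your overall strategy coincides with the paper's: reduce to monomials, derive a first-order recurrence for $\pi_{n,m}=U_{n,q}(e_m;\cdot)$, pass to the error $E_{n,m}=\Theta_{n,m}$, and iterate. The paper carries this out via an explicit recurrence (its Lemma for $\Theta_{n,m}$) together with the monomial estimate of Theorem~\ref{t:convergence}, exactly as you outline.

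There is, however, a real slip in Step~3. Your displayed inequality carries the middle term $\frac{r(1+r)}{[n+1]_q}\,\|D_qE_{n,m}\|_r$ and you propose to control it ``by the induction hypothesis together with a Bernstein-type bound.'' That does not close: to bound $D_q$ of a degree-$m$ polynomial on $|z|\le r$ one needs (via the complex mean-value argument and Bernstein's inequality on the disk of radius $qr$) the quantity $\|E_{n,m}\|_{qr}$, not $\|E_{n,m}\|_r$; the induction hypothesis lives only on the smaller disk, so the loop is circular. The paper avoids this by writing the recurrence for $\Theta_{n,m}$ so that the $D_q$ falls on $\pi_{n,m-1}-e_{m-1}$ rather than on $\Theta_{n,m-1}$; that term is then bounded using the already-proved estimate of Theorem~\ref{t:convergence} on the disk $qr$ (legitimate because $qr<R/q$ under the hypothesis $r<R/q^2$), which is precisely what produces the factor $(q^2r)^{m-2}$. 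Equivalently, you can keep your inequality but bound $\|D_qE_{n,m}\|_r$ by splitting $E_{n,m}=(\pi_{n,m}-e_m)-\frac{1}{[n+1]_q}L_q(e_m)$ and estimating each piece separately via Theorem~\ref{t:convergence} and the explicit formula for $L_q(e_m)$; either way, the induction hypothesis alone is not what controls the $D_q$ term.
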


Now we are in position to prove that the order of approximation in Theorem
\ref{t:convergence} is exactly $q^{-n}$ versus $1/n$ for the classical genuine
Bernstein-Durrmeyer polynomials, see \cite{gal2}.

\begin{theorem}
\label{t:exactdegree}Let $1<q<R$, $1\leq r<\frac{R}{q^{2}}$ and $f\in H\left(
\mathbb{D}_{R}\right)  $. If $f$ is not a polynomial of degree $\leq1$, the
estimate%
\[
\left\Vert U_{n,q}\left(  f\right)  -f\right\Vert _{r}\geq\frac{1}{\left[
n+1\right]  _{q}}C_{r,q}\left(  f\right)  ,\ \ \ n\in\mathbb{N},
\]
holds, where the constant $C_{r,q}\left(  f\right)  $ depends on $f,$ $q$ and
$r$ but is independent of $n$.
\end{theorem}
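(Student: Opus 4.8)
The plan is to follow the standard lower-bound technique used for such exact-order results (as in Gal's book \cite{galbook} and in \cite{gal2}): start from the Voronovskaja-type estimate of Theorem \ref{t:voronovskaja}, which can be rewritten as
\[
U_{n,q}\left(  f;z\right)  -f\left(  z\right)  =\frac{1}{\left[  n+1\right]
_{q}}\left(  L_{q}\left(  f;z\right)  +\frac{1}{\left[  n+1\right]  _{q}}
R_{n}\left(  f;z\right)  \right)  ,
\]
where $\left\Vert R_{n}\left(  f\right)  \right\Vert _{r}\leq 4r^{2}\left(
1+r\right)  ^{2}\sum_{m=3}^{\infty}\left\vert a_{m}\right\vert \left(
m-1\right)  ^{2}\left(  m-2\right)  ^{2}\left(  q^{2}r\right)  ^{m-2}=:M_{r,q}\left(  f\right)  <\infty$ since $q^{2}r<R$. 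Then by the triangle inequality in the form $\left\Vert F+G\right\Vert \geq \left\Vert F\right\Vert -\left\Vert G\right\Vert$,
\[
\left\Vert U_{n,q}\left(  f\right)  -f\right\Vert _{r}\geq\frac{1}{\left[
n+1\right]  _{q}}\left(  \left\Vert L_{q}\left(  f\right)  \right\Vert _{r}
-\frac{1}{\left[  n+1\right]  _{q}}M_{r,q}\left(  f\right)  \right)  .
\]

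First I would argue that $\left\Vert L_{q}\left(  f\right)  \right\Vert _{r}>0$. This is the conceptual heart of the proof: if $L_{q}\left(  f;z\right)  \equiv 0$ on $\mathbb{D}_{r}$, then from the definition \eqref{lq} we get $D_{q}f\left(  z\right)  =D_{q^{-1}}f\left(  z\right)$ for all $z$ with $\left\vert z\right\vert <R/q^{2}$ except possibly $z=1$; by analyticity this forces $D_{q}f=D_{q^{-1}}f$ identically. Writing $f\left(  z\right)  =\sum_{m\geq 0}a_{m}z^{m}$ and using $D_{q}z^{m}=\left[  m\right]  _{q}z^{m-1}$, one obtains $a_{m}\left(  \left[  m\right]  _{q}-\left[  m\right]  _{q^{-1}}\right)  =0$ for every $m$. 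Since $q>1$, a direct computation gives $\left[  m\right]  _{q}-\left[  m\right]  _{q^{-1}}=\left(  q^{m-1}-q^{-(m-1)}\right)  /(q-q^{-1})\cdot(\text{positive})$ — in any case it is nonzero for all $m\geq 2$ — so $a_{m}=0$ for all $m\geq 2$, i.e. $f$ is a polynomial of degree $\leq 1$, contradicting the hypothesis. Hence $\left\Vert L_{q}\left(  f\right)  \right\Vert _{r}=:c_{r,q}\left(  f\right)  >0$.

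Finally I would split into two ranges of $n$. For $n$ large enough that $\frac{1}{\left[  n+1\right]  _{q}}M_{r,q}\left(  f\right)  \leq\frac{1}{2}c_{r,q}\left(  f\right)$ — say $n\geq n_{0}$ — the displayed inequality yields $\left\Vert U_{n,q}\left(  f\right)  -f\right\Vert _{r}\geq\frac{1}{\left[  n+1\right]  _{q}}\cdot\frac{1}{2}c_{r,q}\left(  f\right)$. For the finitely many remaining indices $1\leq n<n_{0}$, each norm $\left\Vert U_{n,q}\left(  f\right)  -f\right\Vert _{r}$ is strictly positive (again because $f$ is not a polynomial of degree $\leq 1$ and $U_{n,q}$ reproduces only a finite-dimensional space / by Theorem \ref{t:convergence} the operators are not the identity on $f$), so $\left[  n+1\right]  _{q}\left\Vert U_{n,q}\left(  f\right)  -f\right\Vert _{r}$ is a positive number; take the minimum over this finite set. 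Setting $C_{r,q}\left(  f\right)$ to be the smaller of $\frac{1}{2}c_{r,q}\left(  f\right)$ and this finite minimum gives the claim with a constant independent of $n$.

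The main obstacle is the positivity $\left\Vert L_{q}\left(  f\right)  \right\Vert _{r}>0$, i.e. showing that the leading Voronovskaja term genuinely does not vanish for non-trivial $f$; the rest is the routine two-regime argument. One should double-check the algebra $\left[  m\right]  _{q}\neq\left[  m\right]  _{q^{-1}}$ for $m\geq 2$ and handle the apparent singularity of $L_{q}$ at $z=1$ carefully (it is removable, since $D_{q}f-D_{q^{-1}}f$ vanishes to the right order, or one simply works on $\left\vert z\right\vert \leq r$ with $r<R/q^{2}$ and uses the power-series form of $L_{q}$).
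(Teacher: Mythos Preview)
Your proposal is correct and follows essentially the same approach as the paper: rewrite $U_{n,q}(f)-f$ via the Voronovskaja decomposition, use the reverse triangle inequality, show $\left\Vert L_{q}(f)\right\Vert_{r}>0$ by the same $D_{q}f=D_{q^{-1}}f\Rightarrow a_{m}=0$ for $m\geq2$ argument, handle large $n$ via Theorem~\ref{t:voronovskaja}, and take a finite minimum for the remaining small $n$. The only slight difference is cosmetic (you name the remainder $R_{n}$ and bound it by a fixed constant $M_{r,q}(f)$, whereas the paper carries $[n+1]_{q}$ times the Voronovskaja error explicitly), and your justification of strict positivity for the finitely many small $n$ is at the same level of detail as the paper's.
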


From Theorem \ref{t:voronovskaja} we conclude that for $q>1,$ $\left[
n+1\right]  _{q}\left(  U_{n,q}\left(  f;z\right)  -f\left(  z\right)
\right)  \rightarrow L_{q}\left(  f;z\right)  $ in $H\left(  \mathbb{D}%
_{R/q}\right)  $ and therefore, $L_{q}\left(  f;z\right)  \in H\left(
\mathbb{D}_{R/q}\right)  $. Furthermore, we have the following saturation of
convergence for the genuine $q$-Bernstein-Durrmeyer polynomials for fixed
$q>1$.

\begin{theorem}
\label{t:saturation}Let $1<q<R$, $1\leq r<\frac{R}{q^{2}}$. If a function $f$
is analytic in the disc $\mathbb{D}_{R/q}$, then $\left\vert U_{n,q}\left(
f;z\right)  -f\left(  z\right)  \right\vert =o\left(  q^{-n}\right)  $ for
infinite number of points having an accumulation point on $\mathbb{D}_{R/q}$
if and only if $f$ is linear.
\end{theorem}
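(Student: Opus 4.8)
This is an equivalence, so the plan is to treat the two implications separately; essentially all the work lies in the ``only if'' direction. The ``if'' direction is immediate: since $U_{n,q}$ reproduces linear functions, any linear $f$ satisfies $U_{n,q}\left(f;z\right)-f\left(z\right)\equiv0$, which is trivially $o\left(q^{-n}\right)$ at every point of $\mathbb{D}_{R/q}$, in particular on any infinite subset accumulating there.

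For the ``only if'' direction I would start from the Voronovskaja estimate of Theorem \ref{t:voronovskaja}, which (as recorded in the paragraph preceding the present theorem) yields $\left[n+1\right]_{q}\left(U_{n,q}\left(f;z\right)-f\left(z\right)\right)\to L_{q}\left(f;z\right)$ in $H\left(\mathbb{D}_{R/q}\right)$, hence pointwise on $\mathbb{D}_{R/q}$. The key elementary fact is that $\left[n+1\right]_{q}=\left(q^{n+1}-1\right)/\left(q-1\right)$, so $\left[n+1\right]_{q}q^{-n}\to q/\left(q-1\right)\neq0$. Hence at any point $z$ where $\left\vert U_{n,q}\left(f;z\right)-f\left(z\right)\right\vert =o\left(q^{-n}\right)$ one has
\[
\left[n+1\right]_{q}\left(U_{n,q}\left(f;z\right)-f\left(z\right)\right)=\bigl(\left[n+1\right]_{q}q^{-n}\bigr)\,q^{n}\bigl(U_{n,q}\left(f;z\right)-f\left(z\right)\bigr)\longrightarrow0,
\]
and comparing the two limits forces $L_{q}\left(f;z\right)=0$ there. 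By hypothesis this occurs on an infinite set with an accumulation point inside $\mathbb{D}_{R/q}$, so that, $L_{q}\left(f;\cdot\right)$ being analytic on $\mathbb{D}_{R/q}$, the identity theorem gives $L_{q}\left(f;z\right)\equiv0$ on $\mathbb{D}_{R/q}$.

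It then remains to show that $L_{q}\left(f;\cdot\right)\equiv0$ implies $f$ linear. By the definition (\ref{lq}), the vanishing of $L_{q}\left(f;z\right)$ for $z\neq1$ is equivalent to $D_{q}f\left(z\right)=D_{q^{-1}}f\left(z\right)$ on $\mathbb{D}_{R/q}$. Expanding $f\left(z\right)=\sum_{m\geq0}a_{m}z^{m}$ and comparing Taylor coefficients — the coefficient of $z^{m-1}$ being $a_{m}\left[m\right]_{q}$ in $D_{q}f$ and $a_{m}\left[m\right]_{q^{-1}}$ in $D_{q^{-1}}f$ — gives $a_{m}\bigl(\left[m\right]_{q}-\left[m\right]_{q^{-1}}\bigr)=0$ for all $m$. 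Since for $q>1$ and $m\geq2$ the inequality $\left[m\right]_{q}=1+q+\cdots+q^{m-1}>1+q^{-1}+\cdots+q^{-\left(m-1\right)}=\left[m\right]_{q^{-1}}$ is strict, it follows that $a_{m}=0$ for every $m\geq2$, that is, $f$ is a polynomial of degree $\leq1$, which closes the equivalence.

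The step I expect to be the main obstacle is the middle one: one must marry the uniform $O\bigl(\left[n+1\right]_{q}^{-2}\bigr)$ control of Theorem \ref{t:voronovskaja} with the precise growth $\left[n+1\right]_{q}\sim q^{n+1}/\left(q-1\right)$ so that the $o\left(q^{-n}\right)$ hypothesis genuinely annihilates the Voronovskaja limit, and then keep track of the radii carefully so that the accumulation point of the exceptional set indeed lies in the disc of analyticity of $L_{q}\left(f;\cdot\right)$ and the identity theorem is legitimately applied. The remaining pieces — the ``if'' direction and the arithmetic fact $\left[m\right]_{q}\neq\left[m\right]_{q^{-1}}$ for $q>1$, $m\geq2$ — are routine.
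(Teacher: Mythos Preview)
Your proposal is correct and follows essentially the same route as the paper's proof: use the Voronovskaja limit $\left[n+1\right]_{q}\bigl(U_{n,q}(f;z)-f(z)\bigr)\to L_{q}(f;z)$ together with $\left[n+1\right]_{q}\asymp q^{n}$ to force $L_{q}(f;z)=0$ on the exceptional set, invoke the identity (unicity) theorem to get $L_{q}(f;\cdot)\equiv 0$ on $\mathbb{D}_{R/q}$, and then read off $a_{m}=0$ for $m\geq 2$ from the definition (\ref{lq}). You supply more detail than the paper (the explicit ``if'' direction, the computation $\left[n+1\right]_{q}q^{-n}\to q/(q-1)$, and the strict inequality $\left[m\right]_{q}>\left[m\right]_{q^{-1}}$ for $m\geq 2$), but the argument is the same.
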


The next theorem shows that $L_{q}\left(  f;z\right)  ,$ $q\geq1,$ is
continuous in the parameter $q$ for $f\in H\left(  \mathbb{D}_{R}\right)  $,
$R>1$.

\begin{theorem}
\label{t:continuous}Let $R>1$ and $f\in H\left(  \mathbb{D}_{R}\right)  $.
Then for any $r,$ $0<r<R$,%
\[
\lim_{q\rightarrow1+}L_{q}\left(  f;z\right)  =L_{1}\left(  f;z\right)
\]
uniformly on $\mathbb{D}_{R}$.
\end{theorem}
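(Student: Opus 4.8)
The plan is to diagonalize $L_{q}(f;\cdot)$ on the monomial basis and then pass to the limit in the resulting power series by a dominated-convergence argument for series. Write $f(z)=\sum_{m=0}^{\infty}a_{m}z^{m}$, so that $D_{q}f(z)=\sum_{m=1}^{\infty}a_{m}[m]_{q}z^{m-1}$ and $D_{q^{-1}}f(z)=\sum_{m=1}^{\infty}a_{m}[m]_{q^{-1}}z^{m-1}$, both series converging on the discs on which they are needed. Putting
\[
e_{m}(q):=\frac{q\bigl([m]_{q}-[m]_{q^{-1}}\bigr)}{q-1},
\]
definition \eqref{lq} gives, for $q>1$, $L_{q}(f;z)=(1-z)\sum_{m=2}^{\infty}a_{m}e_{m}(q)z^{m-1}$ (the index $m=1$ drops out since $[1]_{q}=[1]_{q^{-1}}=1$), whereas $L_{1}(f;z)=(1-z)zf''(z)=(1-z)\sum_{m=2}^{\infty}a_{m}m(m-1)z^{m-1}$. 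Consequently
\[
L_{q}(f;z)-L_{1}(f;z)=(1-z)\sum_{m=2}^{\infty}a_{m}\bigl(e_{m}(q)-m(m-1)\bigr)z^{m-1},
\]
and everything reduces to estimating the coefficients $e_{m}(q)-m(m-1)$.

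First I would derive a closed form for $e_{m}(q)$. Since $[m]_{q}-[m]_{q^{-1}}=\sum_{j=1}^{m-1}(q^{j}-q^{-j})$, and $q^{j}-q^{-j}=(q-q^{-1})\sum_{i=0}^{j-1}q^{\,j-1-2i}$ with $q-q^{-1}=(q-1)(q+1)/q$, one obtains
\[
e_{m}(q)=(q+1)\sum_{j=1}^{m-1}\sum_{i=0}^{j-1}q^{\,j-1-2i}.
\]
Two facts follow immediately. First, $e_{m}(1)=2\sum_{j=1}^{m-1}j=m(m-1)$, and since each summand $q^{\,j-1-2i}$ is continuous in $q$, $\lim_{q\to 1^{+}}e_{m}(q)=m(m-1)$ for every fixed $m$. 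Second, because $q^{\,j-1-2i}\le q^{\,j-1}$ for $q>1$, one has the crude monotone bound $0<e_{m}(q)\le(q+1)\tfrac{m(m-1)}{2}\,q^{\,m-2}$.

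It remains to turn pointwise-in-$m$ convergence into uniform convergence of the series, which is the only genuine obstacle. Fix $r$ with $0<r<R$ and choose $q_{1}\in(1,R/r)$ — possible precisely because $r<R$; moreover $q_{1}r<R$ guarantees that $L_{q}(f;\cdot)$ is analytic on $\overline{\mathbb{D}_{r}}$ for all $q\in(1,q_{1})$, so the statement is to be understood on the compact disc $\overline{\mathbb{D}_{r}}$. For $q\in(1,q_{1})$ the bound above together with $0\le m(m-1)$ gives
\[
\bigl|e_{m}(q)-m(m-1)\bigr|\le(q_{1}+1)\tfrac{m(m-1)}{2}\,q_{1}^{\,m-2}+m(m-1)=:h_{m},
\]
a majorant independent of $q$, and since $q_{1}r<R$ and $f\in H(\mathbb{D}_{R})$ we have $\sum_{m=2}^{\infty}|a_{m}|\,h_{m}\,r^{m-1}<\infty$. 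Therefore, for all $|z|\le r$,
\[
\bigl|L_{q}(f;z)-L_{1}(f;z)\bigr|\le(1+r)\sum_{m=2}^{\infty}|a_{m}|\,\bigl|e_{m}(q)-m(m-1)\bigr|\,r^{m-1},
\]
where each term tends to $0$ as $q\to 1^{+}$ and is dominated by the summable sequence $(1+r)|a_{m}|h_{m}r^{m-1}$. Dominated convergence for series then forces the right-hand side to $0$, which is the asserted uniform convergence on $\overline{\mathbb{D}_{r}}$. The crux is the $q$-uniform summable majorant $h_{m}$; the slack $q_{1}r<R$ afforded by $r<R$ is exactly what makes it summable against $|a_{m}|r^{m-1}$.
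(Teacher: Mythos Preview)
Your proof is correct and follows essentially the same route as the paper: expand $L_q(f;z)$ in the monomial basis, verify pointwise-in-$m$ convergence of the coefficients to $m(m-1)$, and upgrade to uniform convergence via a $q$-independent summable majorant obtained from the slack $q_{1}r<R$. The only cosmetic differences are that the paper uses the representation $e_{m}(q)=q\sum_{i=1}^{m-1}[i]_{q}+\sum_{i=1}^{m-1}[i]_{q^{-1}}$ from Lemma~\ref{Lem:lq} (rather than your double sum $(q+1)\sum_{j}\sum_{i}q^{\,j-1-2i}$), derives explicit bounds carrying a visible factor of $(q-1)$, and then performs the standard finite-part/tail $\varepsilon$-split by hand instead of invoking dominated convergence for series; the underlying argument is the same.
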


\section{Auxiliary results}

The $q$-analogue of Beta function for $0<q<1$ (see \cite{AAR}) is defined as
\[
B_{q}(m,n)=\int_{0}^{1}t^{m-1}(1-qt)_{q}^{n-1}d_{q}t,\,\,\,m,n>0,\ \ 0<q<1.
\]
Since we consider the case $q>1$, we need to use $B_{q^{-1}}(m,n):$%
\[
B_{q^{-1}}(m,n)=\int_{0}^{1}t^{m-1}(1-q^{-1}t)_{q^{-1}}^{n-1}d_{q^{-1}%
}t,\,\,\,m,n>0,\ \ 0<q^{-1}<1.
\]
Also, it is known that
\[
B_{q^{-1}}(m,n)=\frac{[m-1]_{q^{-1}}![n-1]_{q^{-1}}!}{[m+n-1]_{q^{-1}}%
!},\ \ 0<q^{-1}<1.
\]
For $m=0,1,...$, we have%
\begin{align*}
&  \left[  n-1\right]  _{q^{-1}}q^{k-1}\int_{0}^{1}t^{m}p_{n-2,k-1}\left(
q^{-1};q^{-1}t\right)  d_{q^{-1}}t\\
&  =\left[  n-1\right]  _{q^{-1}}\left[
\begin{array}
[c]{c}%
n-2\\
k-1
\end{array}
\right]  _{q^{-1}}q^{m\left(  k-n\right)  }\int_{0}^{1}t^{k+m-1}\left(
1-q^{-1}t\right)  _{q^{-1}}^{n-k-1}d_{q^{-1}}t\\
&  =q^{m\left(  k-n\right)  }\frac{\left[  n-1\right]  _{q^{-1}}!}{\left[
k-1\right]  _{q^{-1}}!\left[  n-k-1\right]  _{q^{-1}}!}B_{q^{-1}}(k+m,n-k)\\
&  =q^{m\left(  k-n\right)  }\frac{\left[  n-1\right]  _{q^{-1}}!}{\left[
k-1\right]  _{q^{-1}}!\left[  n-k-1\right]  _{q^{-1}}!}\frac{\left[
k+m-1\right]  _{q^{-1}}!\left[  n-k-1\right]  _{q^{-1}}!}{\left[
k+m+n-k-1\right]  _{q^{-1}}!}\\
&  =\frac{\left[  n-1\right]  _{q}!\left[  k+m-1\right]  _{q}!}{\left[
k-1\right]  _{q}!\left[  n+m-1\right]  _{q}!}=\frac{\left[  k+m-1\right]
_{q}...\left[  k\right]  _{q}}{\left[  n+m-1\right]  _{q}...\left[  n\right]
_{q}}.
\end{align*}
Thus, we get the following formula for $U_{n,q}\left(  e_{m};z\right)  :$
\begin{align}
U_{n,q}\left(  e_{m};z\right)   &  =f\left(  0\right)  p_{n,0}\left(
q;z\right)  +f\left(  1\right)  p_{n,n}\left(  q;z\right)  \nonumber\\
&  +\left[  n-1\right]  _{q^{-1}}%
{\displaystyle\sum_{k=1}^{n-1}}
p_{n,k}\left(  q;z\right)  \int_{0}^{1}p_{n-2,k-1}\left(  q^{-1}%
;q^{-1}t\right)  f\left(  q^{k-n}t\right)  d_{q^{-1}}t\nonumber\\
&  =z^{n}+%
{\displaystyle\sum_{k=1}^{n-1}}
p_{n,k}\left(  q;z\right)  \frac{\left[  k+m-1\right]  _{q}...\left[
k\right]  _{q}}{\left[  n+m-1\right]  _{q}...\left[  n\right]  _{q}%
}.\label{u1}%
\end{align}
Note for $m=0,1,2$ we have%
\[
U_{n,q}\left(  e_{0};z\right)  =1,\ \ \ U_{n,q}\left(  e_{1};z\right)
=z,\ \ \ U_{n,q}\left(  e_{2};z\right)  =z^{2}+\frac{\left(  1+q\right)
z\left(  1-z\right)  }{\left[  n+1\right]  }.
\]

\begin{lemma}
$U_{n,q}\left(  e_{m};z\right)  $ is a polynomial of degree less than or equal
to $\min\left(  m,n\right)  $ and
\[
U_{n,q}\left(  e_{m};z\right)  =\frac{\left[  n-1\right]  _{q}!}{\left[
n+m-1\right]  _{q}!}\sum_{s=1}^{m}S_{q}\left(  m,s\right)  \left[  n\right]
_{q}^{s}B_{n,q}\left(  e_{s};z\right)  .
\]

\end{lemma}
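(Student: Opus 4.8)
The plan is to read the formula off the closed form \eqref{u1} after rewriting it in the $q$-Bernstein basis. Take $m\ge 1$ (for $m=0$ one has $U_{n,q}(e_0;z)=1$, so there is nothing to prove). In \eqref{u1} the boundary terms are $e_m(0)p_{n,0}(q;z)=0$ and $e_m(1)p_{n,n}(q;z)=z^{n}$; the latter is exactly the $k=n$ term of the sum (the fraction equalling $1$ there), while the $k=0$ term vanishes because $[0]_q=0$. Hence \eqref{u1} takes the uniform shape
\begin{align*}
U_{n,q}(e_m;z)&=\sum_{k=0}^{n}p_{n,k}(q;z)\,\frac{[k]_q[k+1]_q\cdots[k+m-1]_q}{[n]_q[n+1]_q\cdots[n+m-1]_q}\\
&=\frac{[n-1]_q!}{[n+m-1]_q!}\sum_{k=0}^{n}p_{n,k}(q;z)\,[k]_q[k+1]_q\cdots[k+m-1]_q ,
\end{align*}
because $[n]_q[n+1]_q\cdots[n+m-1]_q=[n+m-1]_q!/[n-1]_q!$.

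The computational heart is then the $q$-Stirling expansion of the rising $q$-factorial. Using $[k+j]_q=q^{j}[k]_q+[j]_q$, the product $\prod_{j=0}^{m-1}[k+j]_q=\prod_{j=0}^{m-1}\bigl(q^{j}[k]_q+[j]_q\bigr)$ is a polynomial of degree $m$ in the single variable $[k]_q$, with coefficients depending only on $m$ and $q$ and with vanishing constant term; calling these coefficients $S_q(m,s)$ we get the identity $[k]_q[k+1]_q\cdots[k+m-1]_q=\sum_{s=1}^{m}S_q(m,s)[k]_q^{s}$ (equivalently, this follows by induction on $m$ from $S_q(m+1,s)=q^{m}S_q(m,s-1)+[m]_q S_q(m,s)$ with $S_q(1,1)=1$). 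Substituting this into the previous display and interchanging the two finite sums gives
\begin{align*}
U_{n,q}(e_m;z)&=\frac{[n-1]_q!}{[n+m-1]_q!}\sum_{s=1}^{m}S_q(m,s)\sum_{k=0}^{n}[k]_q^{s}\,p_{n,k}(q;z)\\
&=\frac{[n-1]_q!}{[n+m-1]_q!}\sum_{s=1}^{m}S_q(m,s)\,[n]_q^{s}\,B_{n,q}(e_s;z),
\end{align*}
since $\sum_{k=0}^{n}[k]_q^{s}p_{n,k}(q;z)=[n]_q^{s}\sum_{k=0}^{n}\bigl([k]_q/[n]_q\bigr)^{s}p_{n,k}(q;z)=[n]_q^{s}B_{n,q}(e_s;z)$ by the definition of the $q$-Bernstein operator. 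This is the claimed identity, and the degree statement is immediate from it: $B_{n,q}(e_s;\cdot)$ is a polynomial of degree at most $\min(s,n)$ (a well-known property of $q$-Bernstein operators), and $s$ ranges only up to $m$, so each summand — and hence $U_{n,q}(e_m;z)$ — has degree at most $\min(m,n)$.

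The only steps that need care are the bookkeeping with $S_q(m,s)$, in particular fixing the normalization so that the rising-$q$-factorial identity holds exactly, and recalling the degree-preservation of $B_{n,q}$; everything else is a finite rearrangement with no convergence issue. As a consistency check, for $m=2$ one has $S_q(2,1)=1$ and $S_q(2,2)=q$, together with $B_{n,q}(e_1;z)=z$ and $B_{n,q}(e_2;z)=\tfrac{q[n-1]_q}{[n]_q}z^{2}+\tfrac{1}{[n]_q}z$, and the formula then reproduces $U_{n,q}(e_2;z)=z^{2}+\tfrac{(1+q)z(1-z)}{[n+1]_q}$ as stated above.
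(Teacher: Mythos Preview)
Your argument is correct and follows essentially the same route as the paper: rewrite \eqref{u1} so that the sum runs over all $k$, factor out $[n-1]_q!/[n+m-1]_q!$, expand the rising $q$-factorial via \eqref{stirling}, swap sums, and recognize $[n]_q^{s}B_{n,q}(e_s;z)$. The only embellishments are your explicit handling of the $k=0,n$ boundary terms and the $m=2$ sanity check, neither of which changes the substance.
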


\begin{proof}
From (\ref{u1}) it follows that%
\begin{align*}
U_{n,q}\left(  e_{m};z\right)   &  =%
{\displaystyle\sum_{k=1}^{n}}
p_{n,k}\left(  q;z\right)  \frac{\left[  k+m-1\right]  _{q}...\left[
k\right]  _{q}}{\left[  n+m-1\right]  _{q}...\left[  n\right]  _{q}}\\
&  =\frac{\left[  n-1\right]  _{q}!}{\left[  n+m-1\right]  _{q}!}%
{\displaystyle\sum_{k=1}^{n}}
\left[  k\right]  _{q}\left[  k+1\right]  _{q}...\left[  k+m-1\right]
_{q}p_{n,k}(q;z).
\end{align*}
Now using%
\begin{equation}
\left[  k\right]  _{q}\left[  k+1\right]  _{q}...\left[  k+m-1\right]  _{q}=%
{\displaystyle\prod\limits_{s=0}^{m-1}}
\left(  q^{s}\left[  k\right]  _{q}+\left[  s\right]  _{q}\right)  =\sum
_{s=1}^{m}S_{q}\left(  m,s\right)  \left[  k\right]  _{q}^{s},
\label{stirling}%
\end{equation}
where $S_{q}\left(  m,s\right)  >0$, $s=1,2,...,m$, are the constants
independent of $k,$ we get
\begin{align*}
U_{n,q}\left(  e_{m};z\right)   &  =\frac{\left[  n-1\right]  _{q}!}{\left[
n+m-1\right]  _{q}!}%
{\displaystyle\sum_{k=0}^{n}}
\sum_{s=1}^{m}S_{q}\left(  m,s\right)  \left[  k\right]  _{q}^{s}%
p_{n,k}(q;z)\\
&  =\frac{\left[  n-1\right]  _{q}!}{\left[  n+m-1\right]  _{q}!}\sum
_{s=1}^{m}S_{q}\left(  m,s\right)  \left[  n\right]  _{q}^{s}B_{n,q}\left(
e_{s};z\right)  ,
\end{align*}
Since $B_{n,q}(e_{s};z)$ is a polynomial of degree less than or equal to
$\min\left(  s,n\right)  $ and $S_{q}\left(  m,s\right)  >0$, $s=1,2,...,m$,
it follows that $U_{n,q}\left(  e_{m};z\right)  $ is a polynomial of degree
less than or equal to $\min\left(  m,n\right)  $.
\end{proof}

\begin{lemma}
The numbers $S_{q}\left(  m,s\right)  ,\ \left(  m,s\right)  \in\left(
\mathbb{N}\cup\left\{  0\right\}  \right)  \times\left(  \mathbb{N}%
\cup\left\{  0\right\}  \right)  ,$ given by (\ref{stirling}) enjoy the
following properties
\begin{align*}
S_{q}\left(  0,0\right)   &  =1,\ \ S_{q}\left(  m,0\right)  =0,\ \ m\in N,\\
S_{q}\left(  m+1,s\right)   &  =\left[  m\right]  _{q}S_{q}\left(  m,s\right)
+q^{m}S_{q}\left(  m,s-1\right)  ,\ \ \ m\in N_{0},\ s\in N,\\
S_{q}\left(  m+1,m+1\right)   &  =q^{m}S_{q}\left(  m,m\right)  ,\ \ \ S_{q}%
\left(  m,s\right)  =0\ \ \text{for\ }\ s>m.
\end{align*}

\end{lemma}

Also, the following lemma holds.

\begin{lemma}
\label{l:id}For all $m,n\in\mathbb{N}$ the identity%
\[
\frac{\left[  n-1\right]  _{q}!}{\left[  n+m-1\right]  _{q}!}\sum_{s=1}%
^{m}S_{q}\left(  m,s\right)  \left[  n\right]  _{q}^{s}=1,
\]
holds.
\end{lemma}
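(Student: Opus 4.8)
The plan is to read off the identity directly from the product expansion (\ref{stirling}) by specializing its free parameter. That relation states that for every nonnegative integer $k$ and every $m\in\mathbb{N}$,
\[
[k]_q[k+1]_q\cdots[k+m-1]_q=\sum_{s=1}^{m}S_q(m,s)\,[k]_q^{s},
\]
where, crucially, the coefficients $S_q(m,s)$ do not depend on $k$. Since $n\in\mathbb{N}$ is itself an admissible value of the parameter $k$, I would simply substitute $k=n$. The left-hand side then telescopes against the $q$-factorials, as $[n+m-1]_q!=[n-1]_q!\cdot[n]_q[n+1]_q\cdots[n+m-1]_q$, so that
\[
[n]_q[n+1]_q\cdots[n+m-1]_q=\frac{[n+m-1]_q!}{[n-1]_q!}.
\]
Dividing the resulting equality through by $[n+m-1]_q!/[n-1]_q!$ gives
\[
\frac{[n-1]_q!}{[n+m-1]_q!}\sum_{s=1}^{m}S_q(m,s)\,[n]_q^{s}=1,
\]
which is exactly the assertion of Lemma \ref{l:id}. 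There is essentially no obstacle here: the only points to notice are that (\ref{stirling}) is a polynomial identity in $[k]_q$ with constant coefficients, hence valid at $k=n$, and that the product of the $m$ consecutive $q$-integers $[n]_q,\dots,[n+m-1]_q$ equals the quotient of two $q$-factorials.

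If one prefers an argument that does not invoke (\ref{stirling}) explicitly, a clean alternative is to specialize the operator formulas. From (\ref{u1}) one has $U_{n,q}(e_m;1)=1$, because at $z=1$ the factor $(1-z)_q^{n-k}$ annihilates every term with $k<n$ while the term $z^{n}$ equals $1$. On the other hand, the preceding lemma gives $U_{n,q}(e_m;z)=\frac{[n-1]_q!}{[n+m-1]_q!}\sum_{s=1}^{m}S_q(m,s)[n]_q^{s}B_{n,q}(e_s;z)$, and $B_{n,q}(e_s;1)=1$ for every $s$ for the same reason (only the $k=n$ term survives and $([n]_q/[n]_q)^{s}=1$). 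Equating the two evaluations at $z=1$ yields the identity. A third route, if an induction is wanted, is to induct on $m$ using the recursion $S_q(m+1,s)=[m]_qS_q(m,s)+q^{m}S_q(m,s-1)$ from the previous lemma together with $q^{m}[n]_q+[m]_q=[n+m]_q$; this reproduces $\sum_{s}S_q(m,s)[n]_q^{s}=[n+m-1]_q!/[n-1]_q!$ step by step, but it is strictly more laborious than the one-line substitution above, so I would fall back on it only if a proof independent of (\ref{stirling}) were required.
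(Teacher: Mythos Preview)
Your primary argument is correct and in fact more direct than the paper's: specializing $k=n$ in (\ref{stirling}) and recognizing the left-hand product as $[n+m-1]_q!/[n-1]_q!$ is a one-line derivation. The paper instead takes exactly the route you describe as your second alternative, namely evaluating the representation $U_{n,q}(e_m;z)=\frac{[n-1]_q!}{[n+m-1]_q!}\sum_{s}S_q(m,s)[n]_q^{s}B_{n,q}(e_s;z)$ at $z=1$ and using the endpoint interpolation $U_{n,q}(e_m;1)=B_{n,q}(e_s;1)=1$. Your substitution argument avoids any reference to the operators and is self-contained from (\ref{stirling}); the paper's evaluation-at-$1$ argument has the mild advantage of tying the identity to the interpolation property that motivates it, but requires having the previous lemma and the Bernstein endpoint values at hand. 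Either way, both proofs are essentially immediate once (\ref{stirling}) is available.
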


\begin{proof}
It follows from end points interpolation property of $U_{n,q}\left(
e_{m};z\right)  $ and $B_{n,q}\left(  e_{s};z\right)  .$Indeed%
\[
1=U_{n,q}\left(  e_{m};1\right)  =\frac{\left[  n-1\right]  _{q}!}{\left[
n+m-1\right]  _{q}!}\sum_{s=1}^{m}S_{q}\left(  m,s\right)  \left[  n\right]
_{q}^{s}B_{n,q}\left(  e_{s};1\right)  =\frac{\left[  n-1\right]  _{q}%
!}{\left[  n+m-1\right]  _{q}!}\sum_{s=1}^{m}S_{q}\left(  m,s\right)  \left[
n\right]  _{q}^{s}.
\]

\end{proof}

Lemma \ref{l:id} implies that for all $m,n\in\mathbb{N}$ and $\left\vert
z\right\vert \leq r$ we have%
\begin{align}
\left\vert U_{n,q}\left(  e_{m};z\right)  \right\vert  &  \leq\frac{\left[
n-1\right]  _{q}!}{\left[  n+m-1\right]  _{q}!}\sum_{s=1}^{m}S_{q}\left(
m,s\right)  \left[  n\right]  _{q}^{s}\left\vert B_{n,q}\left(  e_{s}%
;z\right)  \right\vert \nonumber\\
&  \leq\frac{\left[  n-1\right]  _{q}!}{\left[  n+m-1\right]  _{q}!}\sum
_{s=1}^{m}S_{q}\left(  m,s\right)  \left[  n\right]  _{q}^{s}r^{s}\leq r^{m}.
\label{inq1}%
\end{align}

For our purpose first we need a recurrence formula for $U_{n,q}\left(
e_{m};z\right)  .$

\begin{lemma}
\label{Lem:rec}For all $m,n\in\mathbb{N}\cup\left\{  0\right\}  $ and
$z\in\mathbb{C}$ we have%
\begin{equation}
U_{n,q}\left(  e_{m+1};z\right)  =\frac{q^{m}z\left(  1-z\right)  }{\left[
n+m\right]  _{q}}D_{q}U_{n,q}\left(  e_{m};z\right)  +\frac{q^{m}\left[
n\right]  z+\left[  m\right]  _{q}}{\left[  n+m\right]  _{q}}U_{n,q}\left(
e_{m};z\right)  . \label{rec1}%
\end{equation}

\end{lemma}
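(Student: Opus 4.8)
The plan is to start from the explicit formula \eqref{u1} for $U_{n,q}(e_m;z)$, which expresses the operator as a sum over $k$ of $p_{n,k}(q;z)$ times the rational factor $r_{n,m}(k):=\dfrac{[k+m-1]_q\cdots[k]_q}{[n+m-1]_q\cdots[n]_q}$ (together with the endpoint term $z^n$, which is $p_{n,n}(q;z)$ and corresponds to $k=n$ in the compact form used in the proof of the first Lemma). So I would work with $U_{n,q}(e_m;z)=\sum_{k=1}^{n}p_{n,k}(q;z)\,r_{n,m}(k)$. The key algebraic observation is the one-step recursion for the coefficient: $r_{n,m+1}(k)=\dfrac{[k+m]_q}{[n+m]_q}\,r_{n,m}(k)$, and then one rewrites $[k+m]_q=q^m[k]_q+[m]_q$ (the same identity already used in \eqref{stirling} with a single factor). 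This splits $U_{n,q}(e_{m+1};z)$ into $\dfrac{1}{[n+m]_q}\Big(q^m\sum_k [k]_q\,p_{n,k}(q;z)\,r_{n,m}(k)+[m]_q\sum_k p_{n,k}(q;z)\,r_{n,m}(k)\Big)$, where the second sum is just $[m]_q\,U_{n,q}(e_m;z)$.

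Next I would handle the first sum, which requires expressing $\sum_k [k]_q\,p_{n,k}(q;z)\,r_{n,m}(k)$ in terms of $U_{n,q}(e_m;z)$ and its $q$-derivative. The natural tool is the basic $q$-identity for the Bernstein basis, namely $D_q p_{n,k}(q;z)$ in terms of $p_{n-1,k-1}$ and $p_{n-1,k}$, or equivalently the known differential/difference relation $z(1-z)\,D_q\big(\text{something}\big)$ that produces the factor $[k]_q$. Concretely, one uses the identity $q^{k-1}[k]_q\,p_{n,k}(q;z)\;=\;\dfrac{[n]_q\,z}{?}\,p_{n-1,k-1}(q;qz)$-type relations, or more cleanly: apply $D_q$ to $U_{n,q}(e_m;z)$ termwise. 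Since $D_q(z^k(1-z)_q^{n-k})$ produces a combination whose leading contribution carries the weight $[k]_q$, one obtains after multiplying by $z(1-z)$ an expression of the form $z(1-z)D_qU_{n,q}(e_m;z)=\sum_k c_{n,k}\,p_{n,k}(q;z)\,r_{n,m}(k)$ with $c_{n,k}$ a linear function of $[k]_q$ and $[n]_q$. Solving that linear relation lets me isolate $\sum_k[k]_q\,p_{n,k}(q;z)\,r_{n,m}(k)=z(1-z)D_qU_{n,q}(e_m;z)+[n]_q\,z\,U_{n,q}(e_m;z)$ (the precise constants to be checked), which after substitution gives exactly \eqref{rec1}.

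I expect the main obstacle to be the bookkeeping in the $q$-derivative step: getting the $q$-product rule $D_q\big(z^k(1-z)_q^{n-k}\big)$ right, since $D_q$ does not satisfy the ordinary Leibniz rule and the shifted arguments $qz$ versus $z$ in $(1-z)_q^{n-k}=\prod_{s=0}^{n-k-1}(1-q^sz)$ must be tracked carefully, together with the telescoping of $q$-integers. A clean way to sidestep part of this is to verify the identity first for $z=0$ and $z=1$ (where $U_{n,q}(e_m;\cdot)$ is $0$ and $1$ respectively, by the endpoint interpolation property, so both sides collapse and one only checks a $q$-integer identity), and then argue that both sides are polynomials in $z$ of degree $\le n$ agreeing after one differentiation-type reduction; but the honest route is the direct computation sketched above. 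Once \eqref{rec1} is established for $m\ge 1$, the cases $m=0$ (where $U_{n,q}(e_0;z)=1$, $U_{n,q}(e_1;z)=z$) and $n=0$ are immediate checks, completing the proof.
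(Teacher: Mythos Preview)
Your approach is essentially the same as the paper's: both start from the explicit sum \eqref{u1}, use the one-step recursion $I_{k,m+1}=\dfrac{[k+m]_q}{[n+m]_q}I_{k,m}$ together with $[k+m]_q=q^m[k]_q+[m]_q$, and then identify $\sum_k[k]_q\,p_{n,k}(q;z)\,I_{k,m}$ via the $q$-derivative of $U_{n,q}(e_m;z)$. The relation you guessed, $\sum_k[k]_q\,p_{n,k}(q;z)\,r_{n,m}(k)=z(1-z)D_qU_{n,q}(e_m;z)+[n]_q z\,U_{n,q}(e_m;z)$, is exactly what the paper obtains. The only difference is that the paper bypasses the $q$-Leibniz bookkeeping you flag as the ``main obstacle'' by invoking the ready-made identity
\[
z(1-z)\,D_q\bigl(p_{n,k}(q;z)\bigr)=\bigl([k]_q-[n]_q z\bigr)\,p_{n,k}(q;z),
\]
quoted from \cite{mah5}; once you have this, the rest of your computation goes through verbatim with no further delicate tracking of shifted arguments.
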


\begin{proof}
By simple calculation we obtain (see \cite{mah5})%
\[
z\left(  1-z\right)  D_{q}\left(  p_{n,k}\left(  q;z\right)  \right)  =\left(
\left[  k\right]  _{q}-\left[  n\right]  _{q}z\right)  p_{n,k}\left(
q;z\right)  ,
\]
and%
\begin{align*}
U_{n,q}\left(  e_{m};z\right)   &  =z^{n}+%
{\displaystyle\sum_{k=1}^{n-1}}
p_{n,k}\left(  q;z\right)  \frac{\left[  k+m-1\right]  _{q}...\left[
k\right]  _{q}}{\left[  n+m-1\right]  _{q}...\left[  n\right]  _{q}}\\
&  =z^{n}+%
{\displaystyle\sum_{k=1}^{n-1}}
p_{n,k}\left(  q;z\right)  I_{k,m},\\
I_{k,m}  &  :=\frac{\left[  k+m-1\right]  _{q}...\left[  k\right]  _{q}%
}{\left[  n+m-1\right]  _{q}...\left[  n\right]  _{q}}.
\end{align*}
It follows that%
\begin{align}
&  z\left(  1-z\right)  D_{q}U_{n,q}\left(  e_{m};z\right) \nonumber\\
&  =\left[  n\right]  _{q}z\left(  1-z\right)  z^{n-1}+%
{\displaystyle\sum_{k=1}^{n-1}}
\left(  \left[  k\right]  _{q}-\left[  n\right]  _{q}z\right)  p_{n,k}\left(
q;z\right)  I_{k,m}\nonumber\\
&  =\left[  n\right]  _{q}z^{n}+%
{\displaystyle\sum_{k=1}^{n-1}}
\left[  k\right]  _{q}p_{n,k}\left(  q;z\right)  I_{k,m}-\left[  n\right]
_{q}z%
{\displaystyle\sum_{k=1}^{n-1}}
p_{n,k}\left(  q;z\right)  I_{k,m}-\left[  n\right]  _{q}z^{n+1}\nonumber\\
&  =\left[  n\right]  _{q}z^{n}+%
{\displaystyle\sum_{k=1}^{n-1}}
\left[  k\right]  _{q}p_{n,k}\left(  q;z\right)  I_{k,m}-z\left[  n\right]
_{q}U_{n,q}\left(  e_{m};z\right) \nonumber\\
&  =\left[  n\right]  _{q}z^{n}+q^{-m}%
{\displaystyle\sum_{k=1}^{n-1}}
p_{n,k}\left(  q;z\right)  \left(  q^{m}\left[  k\right]  _{q}+\left[
m\right]  _{q}-\left[  m\right]  _{q}\right)  I_{k,m}-z\left[  n\right]
_{q}U_{n,q}\left(  e_{m};z\right) \nonumber\\
&  =\left[  n\right]  _{q}z^{n}+q^{-m}%
{\displaystyle\sum_{k=1}^{n-1}}
p_{n,k}\left(  q;z\right)  \left(  q^{m}\left[  k\right]  _{q}+\left[
m\right]  _{q}-\left[  m\right]  _{q}\right)  I_{k,m}-z\left[  n\right]
_{q}U_{n,q}\left(  e_{m};z\right) \nonumber\\
&  =q^{-m}\left(  q^{m}\left[  n\right]  _{q}+\left[  m\right]  _{q}-\left[
m\right]  _{q}\right)  z^{n}+q^{-m}\left[  n+m\right]  _{q}%
{\displaystyle\sum_{k=1}^{n-1}}
p_{n,k}\left(  q;z\right)  I_{k,m+1}\nonumber\\
&  -q^{-m}\left[  m\right]  _{q}%
{\displaystyle\sum_{k=1}^{n-1}}
p_{n,k}\left(  q;z\right)  I_{k,m}-z\left[  n\right]  _{q}U_{n,q}\left(
e_{m};z\right) \nonumber\\
&  =q^{-m}\left[  n+m\right]  _{q}U_{n,q}\left(  e_{m+1};z\right)
-q^{-m}\left[  m\right]  _{q}U_{n,q}\left(  e_{m};z\right)  -z\left[
n\right]  _{q}U_{n,q}\left(  e_{m};z\right)  , \label{q1}%
\end{align}
which implies the recurrence in the statement.
\end{proof}

Let%
\[
\Theta_{n,m}\left(  q;z\right)  :=U_{n,q}\left(  e_{m};z\right)  -z^{m}%
-\frac{1}{\left[  n+1\right]  _{q}}\left(  q%
{\displaystyle\sum\limits_{i=1}^{m-1}}
\left[  i\right]  _{q}+%
{\displaystyle\sum\limits_{i=1}^{m-1}}
\left[  i\right]  _{q^{-1}}\right)  z^{m-1}\left(  1-z\right)  .
\]
Using the recurrence formula (\ref{rec1}) we prove two more recurrence formulas.

\begin{lemma}
\label{Lem:vv}For all $m,n\in\mathbb{N}$ and $z\in\mathbb{C}$ we have%
\begin{align}
U_{n,q}\left(  e_{m};z\right)  -z^{m}  &  =\frac{q^{m-1}z\left(  1-z\right)
}{\left[  n+m-1\right]  _{q}}D_{q}U_{n,q}\left(  e_{m-1};z\right) \nonumber\\
&  +\frac{q^{m-1}\left[  n\right]  z+\left[  m-1\right]  _{q}}{\left[
n+m-1\right]  _{q}}\left(  U_{n,q}\left(  e_{m-1};z\right)  -z^{m-1}\right)
+\frac{\left[  m-1\right]  _{q}}{\left[  n+m-1\right]  _{q}}\left(
1-z\right)  z^{m-1},\label{idd1}\\
\Theta_{n,m}\left(  q;z\right)   &  =\frac{q^{m-1}z\left(  1-z\right)
}{\left[  n+m-1\right]  _{q}}D_{q}\left(  U_{n,q}\left(  e_{m-1};z\right)
-z^{m-1}\right) \nonumber\\
&  +\frac{q^{m-1}\left[  n\right]  z+\left[  m-1\right]  _{q}}{\left[
n+m-1\right]  _{q}}\Theta_{n,m-1}\left(  q;z\right)  +R_{n,m}\left(
q;z\right)  , \label{idd2}%
\end{align}
where%
\begin{equation}
R_{n,m}\left(  q;z\right)  =\frac{\left[  m-1\right]  _{q}}{\left[
n+m-1\right]  _{q}\left[  n+1\right]  _{q}}\left[  \left(  1+q^{m-1}\right)
+\left(  q%
{\displaystyle\sum\limits_{i=1}^{m-2}}
\left[  i\right]  _{q}+%
{\displaystyle\sum\limits_{i=1}^{m-2}}
\left[  i\right]  _{q^{-1}}\right)  \left(  z+1\right)  \right]
z^{m-2}\left(  1-z\right)  . \label{rem1}%
\end{equation}

\end{lemma}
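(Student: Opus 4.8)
The plan is to derive both identities \eqref{idd1} and \eqref{idd2} directly from the basic recurrence \eqref{rec1} of Lemma~\ref{Lem:rec}, by isolating the ``main term'' $z^{m}$ inside $U_{n,q}(e_{m};z)$ and tracking the remainders. First I would start from \eqref{rec1} written at level $m-1$, namely
\[
U_{n,q}\left(  e_{m};z\right)  =\frac{q^{m-1}z\left(  1-z\right)  }{\left[  n+m-1\right]  _{q}}D_{q}U_{n,q}\left(  e_{m-1};z\right)  +\frac{q^{m-1}\left[  n\right]  z+\left[  m-1\right]  _{q}}{\left[  n+m-1\right]  _{q}}U_{n,q}\left(  e_{m-1};z\right)  ,
\]
and subtract $z^{m}$ from both sides. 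The point is to rewrite the coefficient factor acting on $U_{n,q}(e_{m-1};z)$ so that it produces exactly $z^{m}$ when $U_{n,q}(e_{m-1};z)$ is replaced by its main part $z^{m-1}$; the discrepancy is the source of the last term in \eqref{idd1}. Concretely, since $q^{m-1}[n]_q + [m-1]_q = [n+m-1]_q + (q^{m-1}-1)[n]_q$ is not quite what I want, I would instead use the cleaner algebraic fact $q^{m-1}[n]_q z + [m-1]_q = \bigl(q^{m-1}[n]_q + [m-1]_q\bigr)z + [m-1]_q(1-z)$ together with $q^{m-1}[n]_q+[m-1]_q = [n+m-1]_q$ (which is just $[n+m-1]_q = [m-1]_q + q^{m-1}[n]_q$). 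This gives
\[
\frac{q^{m-1}[n]_q z+[m-1]_q}{[n+m-1]_q}\,U_{n,q}(e_{m-1};z) = z\,U_{n,q}(e_{m-1};z) + \frac{[m-1]_q(1-z)}{[n+m-1]_q}\,U_{n,q}(e_{m-1};z),
\]
and then splitting $U_{n,q}(e_{m-1};z) = z^{m-1} + \bigl(U_{n,q}(e_{m-1};z)-z^{m-1}\bigr)$ in both occurrences, using $z\cdot z^{m-1}=z^{m}$, yields \eqref{idd1} after noticing that the term $\frac{[m-1]_q(1-z)}{[n+m-1]_q}\bigl(U_{n,q}(e_{m-1};z)-z^{m-1}\bigr)$ must be absorbed into the coefficient of $U_{n,q}(e_{m-1};z)-z^{m-1}$; combining it with $z\bigl(U_{n,q}(e_{m-1};z)-z^{m-1}\bigr)$ and adding the derivative term recovers the stated coefficient $\frac{q^{m-1}[n]z+[m-1]_q}{[n+m-1]_q}$. (I should double-check that $z + \frac{[m-1]_q(1-z)}{[n+m-1]_q} = \frac{q^{m-1}[n]z + [m-1]_q}{[n+m-1]_q}$, which is exactly the identity $[n+m-1]_q z + [m-1]_q(1-z) = q^{m-1}[n]_q z + [m-1]_q$ rearranged — true since $[n+m-1]_q-[m-1]_q = q^{m-1}[n]_q$.)

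For \eqref{idd2} I would subtract from \eqref{idd1} the correction term
\[
\frac{1}{[n+1]_q}\Bigl(q\sum_{i=1}^{m-1}[i]_q + \sum_{i=1}^{m-1}[i]_{q^{-1}}\Bigr)z^{m-1}(1-z)
\]
that defines $\Theta_{n,m}$, and on the right-hand side I would rewrite $D_q U_{n,q}(e_{m-1};z) = D_q z^{m-1} + D_q\bigl(U_{n,q}(e_{m-1};z)-z^{m-1}\bigr)$ and $U_{n,q}(e_{m-1};z)-z^{m-1} = \Theta_{n,m-1}(q;z) + \frac{1}{[n+1]_q}\bigl(q\sum_{i=1}^{m-2}[i]_q + \sum_{i=1}^{m-2}[i]_{q^{-1}}\bigr)z^{m-2}(1-z)$. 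Collecting all the pieces that are \emph{not} of the form (derivative term in $\Theta_{n,m-1}$) $+$ (coefficient times $\Theta_{n,m-1}$) produces $R_{n,m}(q;z)$. The bookkeeping here is the only real work: one has to combine (a) the leftover $\frac{[m-1]_q}{[n+m-1]_q}(1-z)z^{m-1}$ from \eqref{idd1}, (b) the image under $\frac{q^{m-1}z(1-z)}{[n+m-1]_q}D_q$ of the explicit correction term $\frac{1}{[n+1]_q}(\cdots)z^{m-2}(1-z)$, (c) the image under multiplication by $\frac{q^{m-1}[n]z+[m-1]_q}{[n+m-1]_q}$ of that same correction term, and (d) minus the correction term defining $\Theta_{n,m}$ itself, and check that the sum equals \eqref{rem1}. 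The key combinatorial identity needed to close this is the telescoping relation
\[
q\sum_{i=1}^{m-1}[i]_q + \sum_{i=1}^{m-1}[i]_{q^{-1}} = q\sum_{i=1}^{m-2}[i]_q + \sum_{i=1}^{m-2}[i]_{q^{-1}} + q[m-1]_q + [m-1]_{q^{-1}},
\]
together with $q[m-1]_q + [m-1]_{q^{-1}} = \frac{q^m-1}{q-1} + \frac{q^{-(m-1)}-1}{q^{-1}-1}\cdot$, which after simplification equals $(1+q^{m-1})[m-1]_q/[m-1]_q$... more precisely one checks $q[m-1]_q + [m-1]_{q^{-1}} = (1+q^{m-1})\cdot\frac{[m-1]_q}{1}$ fails dimensionally, so the right bookkeeping is that $q[m-1]_q+[m-1]_{q^{-1}}$ should match the $(1+q^{m-1})$ factor multiplied appropriately; I would verify the exact constant by direct expansion.

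The main obstacle I anticipate is purely this last algebraic reconciliation: getting the factor $\frac{[m-1]_q}{[n+m-1]_q[n+1]_q}$ and the bracket $\bigl[(1+q^{m-1}) + (q\sum_{i=1}^{m-2}[i]_q + \sum_{i=1}^{m-2}[i]_{q^{-1}})(z+1)\bigr]$ to come out exactly right, since there are several terms of weight $z^{m-2}(1-z)$ and several of weight $z^{m-1}(1-z) = z\cdot z^{m-2}(1-z)$ that have to be grouped so that the $z$-dependence collapses into the single polynomial $(z+1)$ inside the bracket. The cleanest way to manage this is to factor $z^{m-2}(1-z)$ out of everything first, reduce each contribution to a rational function of $q,n,m$ times either $1$ or $z$, and then match coefficients; the derivative $D_q(z^{m-2}(1-z)) = [m-2]_q z^{m-3} - [m-1]_q z^{m-2}$ has to be handled carefully because multiplying it by $z(1-z)$ does not immediately give a multiple of $z^{m-2}(1-z)$, so one uses $z(1-z)D_q(z^{m-1}) = z(1-z)[m-1]_q z^{m-2} = [m-1]_q z^{m-1}(1-z)$ on the main part and treats the correction-term derivative separately. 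Once the weights are separated the identity is forced, so the proof concludes by comparing the two sides term by term.
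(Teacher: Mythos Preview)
Your plan for \eqref{idd1} is correct and matches the paper's argument exactly: subtract $z^{m}$ from the recurrence of Lemma~\ref{Lem:rec}, add and subtract $z^{m-1}$ inside the coefficient term, and use $[n+m-1]_{q}=[m-1]_{q}+q^{m-1}[n]_{q}$ to extract the residual $\frac{[m-1]_{q}}{[n+m-1]_{q}}(1-z)z^{m-1}$.

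For \eqref{idd2}, however, your bookkeeping list contains a genuine error. Look again at the statement: the derivative term on the right of \eqref{idd2} is $D_{q}\bigl(U_{n,q}(e_{m-1};z)-z^{m-1}\bigr)$, \emph{not} $D_{q}\Theta_{n,m-1}(q;z)$. Hence, after you split $D_{q}U_{n,q}(e_{m-1};z)=D_{q}z^{m-1}+D_{q}\bigl(U_{n,q}(e_{m-1};z)-z^{m-1}\bigr)$, the entire second piece goes into the main formula; you do \emph{not} further peel off $D_{q}$ of the correction term $\frac{1}{[n+1]_{q}}(\cdots)z^{m-2}(1-z)$. Your item (b) is therefore spurious, and the obstacle you anticipate (that $z(1-z)D_{q}\bigl(z^{m-2}(1-z)\bigr)$ fails to be a multiple of $z^{m-2}(1-z)$) simply does not arise. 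The only derivative contribution to $R_{n,m}$ is
\[
\frac{q^{m-1}z(1-z)}{[n+m-1]_{q}}\,D_{q}z^{m-1}
=\frac{q^{m-1}[m-1]_{q}}{[n+m-1]_{q}}\,z^{m-1}(1-z),
\]
which you already computed but did not list. With the correct four ingredients---your (a), (c), (d), and this last term in place of your (b)---the paper then writes $R_{n,m}=T_{n,m}(q)\,z^{m-1}(1-z)+\frac{[m-1]_{q}}{[n+m-1]_{q}[n+1]_{q}}\bigl(q\sum_{i=1}^{m-2}[i]_{q}+\sum_{i=1}^{m-2}[i]_{q^{-1}}\bigr)z^{m-2}(1-z)$ and shows $T_{n,m}(q)=\frac{[m-1]_{q}}{[n+m-1]_{q}[n+1]_{q}}\Bigl[(1+q^{m-1})+\bigl(q\sum_{i=1}^{m-2}[i]_{q}+\sum_{i=1}^{m-2}[i]_{q^{-1}}\bigr)\Bigr]$ by splitting off the $i=m-1$ summands and simplifying. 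Once you drop the extra derivative term, your outline is exactly the paper's, and the algebra closes without the difficulty you feared.
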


\begin{proof}
From the recurrence formula in Lemma \ref{Lem:rec}, for all $m\geq2$ we get%
\begin{align*}
U_{n,q}\left(  e_{m};z\right)  -z^{m}  &  =\frac{q^{m-1}z\left(  1-z\right)
}{\left[  n+m-1\right]  _{q}}D_{q}U_{n,q}\left(  e_{m-1};z\right)
+\frac{q^{m-1}\left[  n\right]  z+\left[  m-1\right]  _{q}}{\left[
n+m-1\right]  _{q}}\left(  U_{n,q}\left(  e_{m-1};z\right)  -z^{m-1}\right) \\
&  +\frac{q^{m-1}\left[  n\right]  z+\left[  m-1\right]  _{q}}{\left[
n+m-1\right]  _{q}}z^{m-1}-z^{m}\\
&  =\frac{q^{m-1}z\left(  1-z\right)  }{\left[  n+m-1\right]  _{q}}%
D_{q}U_{n,q}\left(  e_{m-1};z\right)  +\frac{q^{m-1}\left[  n\right]
z+\left[  m-1\right]  _{q}}{\left[  n+m-1\right]  _{q}}\left(  U_{n,q}\left(
e_{m-1};z\right)  -z^{m-1}\right) \\
&  +\frac{\left[  m-1\right]  _{q}}{\left[  n+m-1\right]  _{q}}\left(
1-z\right)  z^{m-1},
\end{align*}
and%
\begin{align*}
&  U_{n,q}\left(  e_{m};z\right)  -z^{m}-\frac{1}{\left[  n+1\right]  _{q}%
}\left(  q%
{\displaystyle\sum\limits_{i=1}^{m-1}}
\left[  i\right]  _{q}+%
{\displaystyle\sum\limits_{i=1}^{m-1}}
\left[  i\right]  _{q^{-1}}\right)  z^{m-1}\left(  1-z\right) \\
&  =\frac{q^{m-1}z\left(  1-z\right)  }{\left[  n+m-1\right]  _{q}}%
D_{q}\left(  U_{n,q}\left(  e_{m-1};z\right)  -z^{m-1}\right) \\
&  +\frac{q^{m-1}\left[  n\right]  z+\left[  m-1\right]  _{q}}{\left[
n+m-1\right]  _{q}}\left(  U_{n,q}\left(  e_{m};z\right)  -z^{m-1}-\frac
{1}{\left[  n+1\right]  _{q}}\left(  q%
{\displaystyle\sum\limits_{i=1}^{m-2}}
\left[  i\right]  _{q}+%
{\displaystyle\sum\limits_{i=1}^{m-2}}
\left[  i\right]  _{q^{-1}}\right)  z^{m-2}\left(  1-z\right)  \right) \\
&  +R_{n,m}\left(  q;z\right)  ,
\end{align*}
where%
\begin{align*}
R_{n,m}\left(  q;z\right)   &  =\frac{\left[  m-1\right]  _{q}}{\left[
n+m-1\right]  _{q}}\left(  1-z\right)  z^{m-1}-\frac{1}{\left[  n+1\right]
_{q}}\left(  q%
{\displaystyle\sum\limits_{i=1}^{m-1}}
\left[  i\right]  _{q}+%
{\displaystyle\sum\limits_{i=1}^{m-1}}
\left[  i\right]  _{q^{-1}}\right)  z^{m-1}\left(  1-z\right) \\
&  +\frac{q^{m-1}\left[  m-1\right]  _{q}}{\left[  n+m-1\right]  _{q}}\left(
1-z\right)  z^{m-1}\\
&  +\frac{q^{m-1}\left[  n\right]  z+\left[  m-1\right]  _{q}}{\left[
n+m-1\right]  _{q}}\frac{1}{\left[  n+1\right]  _{q}}\left(  q%
{\displaystyle\sum\limits_{i=1}^{m-2}}
\left[  i\right]  _{q}+%
{\displaystyle\sum\limits_{i=1}^{m-2}}
\left[  i\right]  _{q^{-1}}\right)  z^{m-2}\left(  1-z\right) \\
&  :=T_{n^{\prime}m}\left(  q\right)  z^{m-1}\left(  1-z\right)
+\frac{\left[  m-1\right]  _{q}}{\left[  n+m-1\right]  _{q}\left[  n+1\right]
_{q}}\left(  q%
{\displaystyle\sum\limits_{i=1}^{m-2}}
\left[  i\right]  _{q}+%
{\displaystyle\sum\limits_{i=1}^{m-2}}
\left[  i\right]  _{q^{-1}}\right)  z^{m-2}\left(  1-z\right)  .
\end{align*}
Again by simple calculation we obtain%
\begin{align*}
T_{n,m}\left(  q\right)   &  =\frac{\left[  m-1\right]  _{q}}{\left[
n+m-1\right]  _{q}}-\frac{1}{\left[  n+1\right]  _{q}}\left(  q%
{\displaystyle\sum\limits_{i=1}^{m-1}}
\left[  i\right]  _{q}+%
{\displaystyle\sum\limits_{i=1}^{m-1}}
\left[  i\right]  _{q^{-1}}\right) \\
&  +\frac{q^{m-1}\left[  m-1\right]  _{q}}{\left[  n+m-1\right]  _{q}}%
+\frac{q^{m-1}\left[  n\right]  _{q}}{\left[  n+m-1\right]  _{q}}\frac
{1}{\left[  n+1\right]  _{q}}\left(  q%
{\displaystyle\sum\limits_{i=1}^{m-1}}
\left[  i\right]  _{q}+%
{\displaystyle\sum\limits_{i=1}^{m-1}}
\left[  i\right]  _{q^{-1}}\right) \\
&  -\frac{q^{m-1}\left[  n\right]  _{q}}{\left[  n+m-1\right]  _{q}}\frac
{1}{\left[  n+1\right]  _{q}}\left(  q\left[  m-1\right]  _{q}+\left[
m-1\right]  _{q^{-1}}\right) \\
&  =\left(  \frac{\left[  m-1\right]  _{q}}{\left[  n+m-1\right]  _{q}}%
+\frac{q^{m-1}\left[  m-1\right]  _{q}}{\left[  n+m-1\right]  _{q}}%
-\frac{q^{m-1}\left[  n\right]  _{q}}{\left[  n+m-1\right]  _{q}}\frac
{1}{\left[  n+1\right]  _{q}}\left(  q\left[  m-1\right]  _{q}+\left[
m-1\right]  _{q^{-1}}\right)  \right) \\
&  +\left(  \frac{q^{m-1}\left[  n\right]  _{q}}{q^{m-1}\left[  n\right]
_{q}+\left[  m-1\right]  _{q}}-1\right)  \frac{1}{\left[  n+1\right]  _{q}%
}\left(  q%
{\displaystyle\sum\limits_{i=1}^{m-1}}
\left[  i\right]  _{q}+%
{\displaystyle\sum\limits_{i=1}^{m-1}}
\left[  i\right]  _{q^{-1}}\right) \\
&  :=T_{n,m}^{1}\left(  q\right)  +T_{n,m}^{2}\left(  q\right)  ,
\end{align*}
where $T_{n,m}^{1}\left(  q\right)  $ and $T_{n,m}^{2}\left(  q\right)  $ can
be simplified as follows:%
\begin{align*}
T_{n,m}^{2}\left(  q\right)   &  =\left(  1-\frac{q^{m-1}\left[  n\right]
_{q}}{\left[  n+m-1\right]  _{q}}\right)  \frac{1}{\left[  n+1\right]  _{q}%
}\left(  q%
{\displaystyle\sum\limits_{i=1}^{m-2}}
\left[  i\right]  _{q}+%
{\displaystyle\sum\limits_{i=1}^{m-2}}
\left[  i\right]  _{q^{-1}}\right) \\
&  =\frac{\left[  m-1\right]  _{q}}{\left[  n+m-1\right]  _{q}\left[
n+1\right]  _{q}}\left(  q%
{\displaystyle\sum\limits_{i=1}^{m-2}}
\left[  i\right]  _{q}+%
{\displaystyle\sum\limits_{i=1}^{m-2}}
\left[  i\right]  _{q^{-1}}\right)
\end{align*}
and%

\begin{align*}
&  T_{n,m}^{1}\left(  q\right)  =\frac{\left[  m-1\right]  _{q}}{\left[
n+m-1\right]  _{q}}+\frac{q^{m-1}\left[  m-1\right]  _{q}}{\left[
n+m-1\right]  _{q}}\\
&  -\frac{q^{m-1}\left[  n\right]  _{q}}{\left[  n+m-1\right]  _{q}}\frac
{1}{\left[  n+1\right]  _{q}}\left(  q\left[  m-1\right]  _{q}+\left[
m-1\right]  _{q^{-1}}\right) \\
&  =\left[  m-1\right]  _{q}\left(  \frac{1}{\left[  n+m-1\right]  _{q}}%
-\frac{q}{\left[  n+1\right]  _{q}}\frac{q^{m-1}\left[  n\right]  _{q}%
}{\left[  n+m-1\right]  _{q}}\right) \\
&  +\left[  m-1\right]  _{q}\left(  \frac{q^{m-1}}{\left[  n+m-1\right]  _{q}%
}-\frac{1}{\left[  n+1\right]  _{q}}\frac{q\left[  n\right]  _{q}}{\left[
n+m-1\right]  _{q}}\right) \\
&  =\left[  m-1\right]  _{q}\frac{\left[  n+1\right]  _{q}-q^{m}\left[
n\right]  _{q}}{\left[  n+m-1\right]  _{q}\left[  n+1\right]  _{q}}+\left[
m-1\right]  _{q}\frac{q^{m-1}\left[  n+1\right]  _{q}-q\left[  n\right]  _{q}%
}{\left[  n+m-1\right]  _{q}\left[  n+1\right]  _{q}}\\
&  =\left[  m-1\right]  _{q}\frac{\left(  1+q^{m-1}\right)  \left[
n+1\right]  _{q}-\left(  1+q^{m-1}\right)  q\left[  n\right]  _{q}}{\left[
n+m-1\right]  _{q}\left[  n+1\right]  _{q}}\\
&  =\frac{\left[  m-1\right]  _{q}\left(  1+q^{m-1}\right)  }{\left[
n+m-1\right]  _{q}\left[  n+1\right]  _{q}}.
\end{align*}

\end{proof}

\begin{lemma}
\label{Lem:lq}Let $q>1$ and $f\in H\left(  \mathbb{D}_{R}\right)  $. The
function $L_{q}\left(  f;z\right)  $ has the following representation
\[
L_{q}\left(  f;z\right)  =%
{\displaystyle\sum\limits_{m=2}^{\infty}}
a_{m}\left(  q%
{\displaystyle\sum\limits_{i=1}^{m-1}}
\left[  i\right]  +%
{\displaystyle\sum\limits_{i=1}^{m-1}}
\left[  i\right]  _{q^{-1}}\right)  z^{m-1}\left(  1-z\right)  ,\ \ \ z\in
\mathbb{D}_{R}.
\]

\end{lemma}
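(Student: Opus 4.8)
The plan is to pass to power series and differentiate termwise. Writing $f(z)=\sum_{m=0}^{\infty}a_m z^m$, observe that since $q>1$ the function $D_qf(z)=\frac{f(qz)-f(z)}{(q-1)z}$ is analytic on $\left|z\right|<R/q$, and expanding $f(qz)=\sum_{m\ge 0}a_mq^mz^m$ there one gets $D_qf(z)=\sum_{m\ge 1}a_m\frac{q^m-1}{q-1}z^{m-1}=\sum_{m\ge 1}a_m[m]_qz^{m-1}$; similarly $D_{q^{-1}}f(z)=\sum_{m\ge 1}a_m[m]_{q^{-1}}z^{m-1}$, which is even analytic on all of $\mathbb{D}_R$ because $0<q^{-1}<1$. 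Substituting both expansions into the definition (\ref{lq}) I would obtain
\[
L_q(f;z)=\frac{q(1-z)}{q-1}\sum_{m=1}^{\infty}a_m\bigl([m]_q-[m]_{q^{-1}}\bigr)z^{m-1}=(1-z)\sum_{m=2}^{\infty}a_m\,\frac{q\bigl([m]_q-[m]_{q^{-1}}\bigr)}{q-1}\,z^{m-1},
\]
the $m=1$ term disappearing since $[1]_q=[1]_{q^{-1}}=1$. Thus the lemma reduces to the scalar identity
\[
\frac{q\bigl([m]_q-[m]_{q^{-1}}\bigr)}{q-1}=q\sum_{i=1}^{m-1}[i]_q+\sum_{i=1}^{m-1}[i]_{q^{-1}}\qquad(m\ge 1),
\]
both sides vanishing for $m=1$.

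This identity I would establish by induction on $m$. Denoting the two sides by $D(m)$ and $S(m)$, one has $S(m+1)-S(m)=q[m]_q+[m]_{q^{-1}}$, while $D(m+1)-D(m)=\frac{q}{q-1}\bigl(([m+1]_q-[m]_q)-([m+1]_{q^{-1}}-[m]_{q^{-1}})\bigr)=\frac{q}{q-1}(q^m-q^{-m})$; a one-line computation with $[m]_q=\frac{q^m-1}{q-1}$ and $[m]_{q^{-1}}=\frac{q-q^{1-m}}{q-1}$ shows both increments equal $\frac{q^{m+1}-q^{1-m}}{q-1}$, and since $D(1)=S(1)=0$ the claim follows. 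Equivalently, summing the geometric series turns the right-hand side into $\frac{q^{m+1}-q^2-q+q^{2-m}}{(q-1)^2}$, which is visibly $\frac{q}{q-1}\cdot\frac{q^m-1-q+q^{1-m}}{q-1}=D(m)$. Finally, since the bracket $q\sum_{i=1}^{m-1}[i]_q+\sum_{i=1}^{m-1}[i]_{q^{-1}}$ grows like $O(q^m)$ and $\sum_m|a_m|\rho^m<\infty$ for every $\rho<R$, the series obtained converges absolutely and locally uniformly for $\left|z\right|<R/q$, which in particular covers the disc $\left|z\right|<R/q^2$ on which $L_q(f;\cdot)$ was defined; this also retroactively legitimizes the termwise operations above.

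I do not anticipate a genuine obstacle: the only real content is the elementary $q$-identity displayed above, and the one point requiring care is bookkeeping with discs of convergence — $D_qf$ being analytic only on $\mathbb{D}_{R/q}$ while $D_{q^{-1}}f$ extends to $\mathbb{D}_R$ — so that the termwise expansion of $L_q(f;z)$ is valid throughout the claimed region.
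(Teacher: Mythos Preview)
Your proposal is correct and follows essentially the same path as the paper's proof: both expand $D_qf$ and $D_{q^{-1}}f$ termwise and reduce the lemma to the scalar identity $\dfrac{q([m]_q-[m]_{q^{-1}})}{q-1}=q\sum_{i=1}^{m-1}[i]_q+\sum_{i=1}^{m-1}[i]_{q^{-1}}$. The only cosmetic difference is in how this identity is checked---the paper inserts $\pm m$ and uses the telescope $[m]_q-m=\sum_{j=0}^{m-1}(q^j-1)=(q-1)\sum_{i=1}^{m-1}[i]_q$ (and its $q^{-1}$ analogue), whereas you argue by induction on $m$.
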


\begin{proof}
Using the following identity
\begin{align*}
\left[  m\right]  _{q}-m  &  =1+q+q^{2}+...q^{m-1}-m\\
&  =\left(  1-1\right)  +\left(  q-1\right)  +\left(  q^{2}-1\right)
+...+\left(  q^{m-1}-1\right) \\
&  =\left(  q-1\right)  \left[  1\right]  _{q}+\left(  q-1\right)  \left[
2\right]  _{q}+...+\left(  q-1\right)  \left[  m-1\right]  _{q}+\\
&  =\left(  q-1\right)  \left(  \left[  1\right]  _{q}+...+\left[  m-1\right]
_{q}\right)  =\left(  q-1\right)
{\displaystyle\sum\limits_{i=1}^{m-1}}
\left[  i\right]  _{q},
\end{align*}
we get
\begin{align*}
L_{q}\left(  f;z\right)   &  =\sum_{m=2}^{\infty}a_{m}\left(  \frac{q\left(
\left[  m\right]  _{q}-\left[  m\right]  _{q^{-1}}\right)  }{q-1}\right)
z^{m-1}\left(  1-z\right) \\
&  =\sum_{m=2}^{\infty}a_{m}\left(  \frac{q\left(  \left[  m\right]
_{q}-m\right)  }{q-1}+\frac{\left[  m\right]  _{q^{-1}}-m}{q^{-1}-1}\right)
z^{m-1}\left(  1-z\right) \\
&  =%
{\displaystyle\sum\limits_{m=2}^{\infty}}
a_{m}\left(  q%
{\displaystyle\sum\limits_{i=1}^{m-1}}
\left[  i\right]  _{q}+%
{\displaystyle\sum\limits_{i=1}^{m-1}}
\left[  i\right]  _{q^{-1}}\right)  z^{m-1}\left(  1-z\right)  ,
\end{align*}
where $f\left(  z\right)  =\sum_{m=0}^{\infty}a_{m}z^{m}$.
\end{proof}

\section{Proofs of the main results}

Firstly we prove that $U_{n,q}(f;z)=\sum_{m=0}^{\infty}a_{m}U_{n,q}(e_{m},z)$.
Indeed denoting $f_{k}(z)=\sum_{j=0}^{k}a_{j}z^{j},|z|\leq r$ with
$m\in\mathbb{N}$, by the linearity of $U_{n,q}$, we have
\[
U_{n,q}(f_{k},z)=\sum_{m=0}^{k}a_{m}U_{n,q}(e_{m},z),
\]
and it is sufficient to show that for any fixed $n\in\mathbb{N}$ and $|z|\leq
r$ with $r\geq1$, we have $\lim_{k\rightarrow\infty}U_{n,q}(f_{k}%
,z)=U_{n,q}(f;z)$. But this is immediate from $\lim_{k\rightarrow\infty
}||f_{k}-f||_{r}=0$, the norm being the defined as $||f||_{r}=\max
\{|f(z)|:{|z|\leq r}\}$ and from the inequality
\begin{align*}
&  |U_{n,q}(f_{k},z)-U_{n,q}(f,z)|\\
&  \leq|f_{k}(0)-f(0)|\cdot|(1-z)^{n}|+|f_{k}(1)-f(1)|\cdot|z^{n}|\\
&  +[n+1]_{q^{-1}}\sum_{j=1}^{n-1}|p_{n,j}(q;z)|q^{j-1}\int_{0}^{1}%
p_{n-2,j-1}(q^{-1},q^{-1}t)|f_{k}(t)-f(t)|d_{q^{-1}}t\\
&  \leq C_{r,n}||f_{k}-f||_{r},
\end{align*}
valid for all $|z|\leq r$, where
\begin{align*}
C_{r,n}  &  =(1+r)^{n}+r^{n}+[n+1]_{q^{-1}}\sum_{j=1}^{n-1}\left[
\begin{array}
[c]{c}%
n\\
j
\end{array}
\right]  _{q}(1+r)^{n-j}r^{j}q^{j-1}\int_{0}^{1}p_{n-2,j-1}(q^{-1}%
;q^{-1}t)d_{q^{-1}}t\\
&  =(1+r)^{n}+r^{n}+\sum_{j=1}^{n-1}\left[
\begin{array}
[c]{c}%
n\\
j
\end{array}
\right]  _{q}(1+r)^{n-j}r^{j}q^{j-1}.
\end{align*}
Therefore we get
\[
|U_{n,q}(f;z)-f(z)|\leq\sum_{m=0}^{\infty}|a_{m}||U_{n,q}(e_{m},z)-e_{m}%
(z)|=\sum_{m=2}^{\infty}|a_{m}||U_{n,q}(e_{m},z)-e_{m}(z)|,
\]
as $U_{n,q}(e_{0},z)=e_{0}(z)$ and .$U_{n,q}(e_{1},z)=e_{1}(z).$

\begin{proof}
[Proof of Theorem \ref{t:convergence}]From the recurrence formula (\ref{idd1})
and the inequality (\ref{inq1}) for $m\geq2$ we get%
\begin{align*}
\left\vert U_{n,q}\left(  e_{m};z\right)  -z^{m}\right\vert  &  \leq
\frac{q^{m-1}z\left(  1-z\right)  }{q^{m-2}\left[  n+1\right]  _{q}+\left[
m-2\right]  _{q}}\left\vert D_{q}U_{n,q}\left(  e_{m-1};z\right)  \right\vert
\\
&  +\frac{q^{m-1}\left[  n\right]  z+\left[  m-1\right]  _{q}}{q^{m-1}\left[
n\right]  _{q}+\left[  m-1\right]  _{q}}\left\vert U_{n,q}\left(
e_{m-1};z\right)  -z^{m-1}\right\vert +\frac{\left[  m-1\right]  _{q}}%
{q^{m-2}\left[  n+1\right]  _{q}+\left[  m-2\right]  _{q}}\left\vert
1-z\right\vert \left\vert z\right\vert ^{m-1}.
\end{align*}
It is known that by a linear transformation, the Bernstein inequality in the
closed unit disk becomes%
\[
\left\vert P_{k}^{\prime}\left(  z\right)  \right\vert \leq\frac{k}{qr_{1}%
}\left\Vert P_{k}\right\Vert _{qr},\ \ \ \text{for all\ \ }\left\vert
z\right\vert \leq qr,\ \ r\geq1,
\]
which combined with the mean value theorem in complex analysis implies%
\[
\left\vert D_{q}\left(  P_{k};z\right)  \right\vert \leq\left\Vert
P_{k}^{\prime}\right\Vert _{qr}\leq\frac{k}{qr}\left\Vert P_{k}\right\Vert
_{qr},
\]
for all $\left\vert z\right\vert \leq qr$, where $P_{k}\left(  z\right)  $ is
a complex polynomial of degree $\leq k$. It follows that%
\begin{align*}
&  \left\vert U_{n,q}\left(  e_{m};z\right)  -z^{m}\right\vert \\
&  \leq\frac{q^{m-1}r\left(  1+r\right)  }{q^{m-2}\left[  n+1\right]
_{q}+\left[  m-2\right]  _{q}}\frac{m-1}{qr}\left\Vert U_{n,q}\left(
e_{m-1}\right)  \right\Vert _{qr}\\
&  +r\left\vert U_{n,q}\left(  e_{m-1};z\right)  -z^{m-1}\right\vert
+\frac{\left[  m-1\right]  _{1/q}}{\left[  n+1\right]  _{q}}\left(
1+r\right)  r^{m-1}\\
&  \leq\frac{\left(  m-1\right)  }{\left[  n+1\right]  _{q}}\left(
1+r\right)  q^{m-1}r^{m-1}+r\left\vert U_{n,q}\left(  e_{m-1};z\right)
-z^{m-1}\right\vert +\frac{\left[  m-1\right]  _{1/q}}{\left[  n+1\right]
_{q}}\left(  1+r\right)  r^{m-1}\\
&  \leq2q\left(  m-1\right)  \frac{r\left(  1+r\right)  }{\left[  n+1\right]
_{q}}\left(  qr\right)  ^{m-2}+r\left\vert U_{n,q}\left(  e_{m-1};z\right)
-z^{m-1}\right\vert .
\end{align*}
By writing the last inequality for $m=2,3,...,$ we easily obtain, step by step
the following%
\begin{align*}
\left\vert U_{n,q}\left(  e_{m};z\right)  -z^{m}\right\vert  &  \leq r\left(
r\left\vert U_{n,q}\left(  e_{m-2};z\right)  -z^{m-2}\right\vert
+2\frac{\left(  m-2\right)  }{\left[  n+1\right]  _{q}}r\left(  1+r\right)
\left(  qr\right)  ^{m-3}\right) \\
&  +2\frac{\left(  m-1\right)  }{\left[  n+1\right]  _{q}}r\left(  1+r\right)
\left(  qr\right)  ^{m-2}\\
&  =r^{2}\left\vert U_{n,q}\left(  e_{m-2};z\right)  -z^{m-2}\right\vert
+2\frac{r\left(  1+r\right)  }{\left[  n+1\right]  _{q}}r^{m-2}\left(
m-1+m-2\right) \\
&  \leq...\leq\frac{r\left(  1+r\right)  }{\left[  n+1\right]  _{q}}m\left(
m-1\right)  q^{m-2}r^{m-2}.
\end{align*}
It follows that%
\[
\left\vert U_{n,q}\left(  f;z\right)  -f\left(  z\right)  \right\vert \leq%
{\displaystyle\sum\limits_{m=2}^{\infty}}
\left\vert a_{m}\right\vert \left\vert U_{n,q}\left(  e_{m};z\right)
-z^{m}\right\vert \leq\frac{r\left(  1+r\right)  }{\left[  n+1\right]  _{q}}%
{\displaystyle\sum\limits_{m=2}^{\infty}}
\left\vert a_{m}\right\vert m\left(  m-1\right)  q^{m-2}r^{m-2}.
\]

\end{proof}

The second main result of the paper is the Voronovskaja theorem with a
quantitative estimate for the complex version of genuine $q$%
-Bernstein-Durrmeyer polynomials.

\begin{proof}
[Proof of Theorem \ref{t:voronovskaja}]By Lemma \ref{Lem:vv} we have%
\begin{align}
\Theta_{n,m}\left(  q;z\right)   &  =\frac{q^{m-1}z\left(  1-z\right)
}{\left[  n+m-1\right]  _{q}}D_{q}\left(  U_{n,q}\left(  e_{m-1};z\right)
-z^{m-1}\right) \label{rr2}\\
&  +\frac{q^{m-1}\left[  n\right]  z+\left[  m-1\right]  _{q}}{\left[
n+m-1\right]  _{q}}\Theta_{n,m-1}\left(  q;z\right)  +R_{n,m}\left(
q;z\right)  ,\nonumber
\end{align}
where%
\[
R_{n,m}\left(  q;z\right)  =\frac{\left[  m-1\right]  _{q}}{\left[
n+m-1\right]  _{q}\left[  n+1\right]  _{q}}\left[  \left(  1+q^{m-1}\right)
+\left(  q%
{\displaystyle\sum\limits_{i=1}^{m-2}}
\left[  i\right]  _{q}+%
{\displaystyle\sum\limits_{i=1}^{m-2}}
\left[  i\right]  _{q^{-1}}\right)  \left(  z+1\right)  \right]
z^{m-2}\left(  1-z\right)  .
\]
It follows that%
\begin{align*}
\left\vert R_{n,m}\left(  q;z\right)  \right\vert  &  \leq\frac{\left[
m-1\right]  _{q}}{\left[  n+1\right]  _{q}^{2}}\left(  \left(  1+q^{m-1}%
\right)  r+\left(  q%
{\displaystyle\sum\limits_{i=1}^{m-2}}
\left[  i\right]  _{q}+%
{\displaystyle\sum\limits_{i=1}^{m-2}}
\left[  i\right]  _{q^{-1}}\right)  \left(  1+r\right)  \right)  \left(
1+r\right)  r^{m-2}\\
&  \leq\frac{\left[  m-1\right]  _{q}}{\left[  n+1\right]  _{q}^{2}}\left(
\left(  1+q^{m-1}\right)  +\left(  q\left(  m-2\right)  \left[  m-2\right]
_{q}+\left(  m-2\right)  ^{2}\right)  \right)  \left(  1+r\right)  ^{2}%
r^{m-2}\\
&  =\frac{q^{m-2}\left[  m-1\right]  _{q^{-1}}}{\left[  n+1\right]  _{q}^{2}%
}q^{m-2}\left(  \left(  \frac{1}{q^{m-2}}+q\right)  +\left(  m-2\right)
\left[  m-2\right]  _{q^{-1}}+\frac{1}{q^{m-2}}\left(  m-2\right)
^{2}\right)  \left(  1+r\right)  ^{2}r^{m-2}\\
&  \leq\frac{3}{\left[  n+1\right]  _{q}^{2}}\left(  m-1\right)  \left(
m-2\right)  ^{2}\left(  1+r\right)  ^{2}\left(  q^{2}r\right)  ^{m-2}%
\end{align*}
for all $m\geq2$, $n\in\mathbb{N}$ and $z\in\mathbb{C}$. (\ref{rr2}) implies
that for $\left\vert z\right\vert \leq r$
\begin{align*}
\left\vert \Theta_{n,m}\left(  q;z\right)  \right\vert  &  \leq r\left\vert
\Theta_{n,m-1}\left(  q;z\right)  \right\vert +\frac{q^{m-1}r\left(
1+r\right)  }{q^{m-2}\left[  n+1\right]  _{q}}\frac{m-1}{qr}\left\Vert
U_{n,q}\left(  e_{m-1}\right)  -e_{m-1}\right\Vert _{qr}\\
&  +\frac{3}{\left[  n+1\right]  _{q}^{2}}\left(  m-1\right)  \left(
m-2\right)  ^{2}\left(  1+r\right)  ^{2}\left(  q^{2}r\right)  ^{m-2}\\
&  \leq r\left\vert \Theta_{n,m-1}\left(  q;z\right)  \right\vert +\frac
{r^{2}\left(  1+r\right)  ^{2}}{\left[  n+1\right]  _{q}^{2}}\left(
m-1\right)  ^{2}\left(  m-2\right)  \left(  q^{2}r\right)  ^{m-3}\\
&  +\frac{3}{\left[  n+1\right]  _{q}^{2}}\left(  m-1\right)  \left(
m-2\right)  ^{2}\left(  1+r\right)  ^{2}\left(  q^{2}r\right)  ^{m-2}\\
&  \leq r\left\vert \Theta_{n,m-1}\left(  q;z\right)  \right\vert
+\frac{4r^{2}\left(  1+r\right)  ^{2}}{\left[  n+1\right]  _{q}^{2}}\left(
m-1\right)  ^{2}\left(  m-2\right)  \left(  q^{2}r\right)  ^{m-2}.
\end{align*}
By writing the last inequality for $m=3,4,...,$ we easily obtain, step by step
the following%
\begin{align*}
&  \left\vert U_{n,q}\left(  f;z\right)  -f\left(  z\right)  -\frac{1}{\left[
n+1\right]  _{q}}L_{q}\left(  f;z\right)  \right\vert \\
&  \leq\frac{4r^{2}\left(  1+r\right)  ^{2}}{\left[  n+1\right]  _{q}^{2}}%
{\displaystyle\sum\limits_{m=2}^{\infty}}
\left\vert a_{m}\right\vert \left(  q^{2}r\right)  ^{m-2}%
{\displaystyle\sum\limits_{j=2}^{m}}
\left(  j-1\right)  ^{2}\left(  j-2\right)  \leq\frac{4r^{2}\left(
1+r\right)  ^{2}}{\left[  n+1\right]  _{q}^{2}}%
{\displaystyle\sum\limits_{m=2}^{\infty}}
\left\vert a_{m}\right\vert \left(  m-1\right)  ^{2}\left(  m-2\right)
^{2}\left(  q^{2}r\right)  ^{m-2}.
\end{align*}

\end{proof}

\begin{proof}
[Proof of Theorem \ref{t:exactdegree}]For all $z\in\mathbb{D}_{R}$ and
$n\in\mathbb{N}$ we get%
\[
U_{n,q}\left(  f;z\right)  -f\left(  z\right)  =\frac{1}{\left[  n+1\right]
_{q}}\left\{  L_{q}\left(  f;z\right)  +\left[  n+1\right]  _{q}\left(
U_{n,q}\left(  f;z\right)  -f\left(  z\right)  -\frac{1}{\left[  n+1\right]
_{q}}L_{q}\left(  f;z\right)  \right)  \right\}  .
\]
It follows that%
\[
\left\Vert U_{n,q}\left(  f\right)  -f\right\Vert _{r}\geq\frac{1}{\left[
n+1\right]  _{q}}\left\{  \left\Vert L_{q}\left(  f;z\right)  \right\Vert
_{r}-\left[  n+1\right]  _{q}\left\Vert U_{n,q}\left(  f\right)  -f-\frac
{1}{\left[  n+1\right]  _{q}}L_{q}\left(  f;z\right)  \right\Vert
_{r}\right\}  .
\]
Because by hypothesis $f$ is not a polynomial of degree $\leq1$ in
$\mathbb{D}_{R}$, it follows $\left\Vert L_{q}\left(  f;z\right)  \right\Vert
_{r}>0$. Indeed, assuming the contrary it follows that $L_{q}\left(
f;z\right)  =0$ for all $z\in\mathbb{D}_{r}$ that is $D_{q}f\left(  z\right)
=D_{q^{-1}}f\left(  z\right)  $ for all $z\in\mathbb{D}_{r}.$ Thus $a_{m}=0,$
$m=2,3,...$ and, $f$ is linear, which is contradiction with the hypothesis.

Now, by Theorem \ref{t:voronovskaja} we have%
\begin{align*}
\left[  n+1\right]  _{q}\left\vert U_{n,q}\left(  f;z\right)  -f\left(
z\right)  -\frac{1}{\left[  n+1\right]  _{q}}L_{q}\left(  f;z\right)
\right\vert  &  \leq\frac{4r^{2}\left(  1+r\right)  ^{2}}{\left[  n+1\right]
_{q}}%
{\displaystyle\sum\limits_{m=3}^{\infty}}
\left\vert a_{m}\right\vert \left(  m-1\right)  ^{2}\left(  m-2\right)
^{2}\left(  q^{2}r\right)  ^{m-2}\\
&  \rightarrow0\ \ \ \text{as\ \ \ }n\rightarrow\infty.
\end{align*}
Consequently, there exists $n_{1}$ (depending only on $f$ and $r$) such that
for all $n\geq n_{1}$ we have%
\[
\left\Vert L_{q}\left(  f;z\right)  \right\Vert _{r}-\left[  n+1\right]
_{q}\left\Vert U_{n,q}\left(  f\right)  -f-\frac{1}{\left[  n+1\right]  _{q}%
}L_{q}\left(  f;z\right)  \right\Vert _{r}\geq\frac{1}{2}\left\Vert
L_{q}\left(  f;z\right)  \right\Vert _{r},
\]
which implies%
\[
\left\Vert U_{n,q}\left(  f\right)  -f\right\Vert _{r}\geq\frac{1}{2\left[
n+1\right]  _{q}}\left\Vert L_{q}\left(  f;z\right)  \right\Vert
_{r},\ \ \ \text{for all }n\geq n_{1}.
\]
For $1\leq n\leq n_{1}-1$ we have
\[
\left\Vert U_{n,q}\left(  f\right)  -f\right\Vert _{r}\geq\frac{1}{\left[
n+1\right]  _{q}}\left(  \left[  n+1\right]  _{q}\left\Vert U_{n,q}\left(
f\right)  -f\right\Vert _{r}\right)  =\frac{1}{\left[  n+1\right]  _{q}%
}M_{r,n,q}\left(  f\right)  >0,
\]
which finally implies that
\[
\left\Vert U_{n,q}\left(  f\right)  -f\right\Vert _{r}\geq\frac{1}{\left[
n+1\right]  _{q}}C_{r,q}\left(  f\right)  ,
\]
for all $n$, with $C_{r,q}\left(  f\right)  =\min\left\{  M_{r,1,q}\left(
f\right)  ,...,M_{r,n_{1}-1,q}\left(  f\right)  ,\frac{1}{2}\left\Vert
L_{q}\left(  f;z\right)  \right\Vert _{r}\right\}  $, which ends the proof.
\end{proof}

\begin{proof}
[Proof of Theorem \ref{t:continuous}]Let $1\leq r<R$, let $1<q_{0}<\dfrac
{R}{r}$ be fixed. Then by Lemma \ref{Lem:lq} for any $1\leq q\leq q_{0}$ and
$\left\vert z\right\vert \leq r$, we have%
\begin{align*}
L_{q}\left(  f;z\right)   &  =%
{\displaystyle\sum\limits_{m=2}^{\infty}}
a_{m}\left(  q%
{\displaystyle\sum\limits_{i=1}^{m-1}}
\left[  i\right]  _{q}+%
{\displaystyle\sum\limits_{i=1}^{m-1}}
\left[  i\right]  _{q^{-1}}\right)  z^{m-1}\left(  1-z\right)  ,\\
L_{1}\left(  f;z\right)   &  =%
{\displaystyle\sum\limits_{m=2}^{\infty}}
a_{m}m\left(  m-1\right)  z^{m-1}\left(  1-z\right)  .
\end{align*}
Using the inequality%
\begin{align*}
\left\vert q%
{\displaystyle\sum\limits_{i=1}^{m-1}}
\left[  i\right]  _{q}-\dfrac{m\left(  m-1\right)  }{2}\right\vert  &  =q%
{\displaystyle\sum\limits_{i=2}^{m-1}}
\left(  \left[  i\right]  _{q}-i\right)  +\left(  q-1\right)  \dfrac{m\left(
m-1\right)  }{2}\\
&  =q\left(  q-1\right)
{\displaystyle\sum\limits_{i=2}^{m-1}}
{\displaystyle\sum\limits_{j=1}^{i}}
\left[  j\right]  _{q}+\left(  q-1\right)  \dfrac{m\left(  m-1\right)  }{2}\\
&  \leq q\left(  q-1\right)  \left[  m-1\right]  _{q}\dfrac{m\left(
m-1\right)  }{2}+\left(  q-1\right)  \dfrac{m\left(  m-1\right)  }{2}\\
&  =\left(  q-1\right)  \dfrac{m\left(  m-1\right)  }{2}\left(  q\left[
m-1\right]  _{q}+1\right) \\
&  \leq\left(  q-1\right)  q^{m-1}\dfrac{m^{2}\left(  m-1\right)  }{2}%
\end{align*}
and%
\begin{align*}
\left\vert
{\displaystyle\sum\limits_{i=1}^{m-1}}
\left[  i\right]  _{q^{-1}}-\dfrac{m\left(  m-1\right)  }{2}\right\vert  &  =%
{\displaystyle\sum\limits_{i=2}^{m-1}}
\left(  i-\left[  i\right]  _{q^{-1}}\right) \\
&  =\left(  1-q^{-1}\right)
{\displaystyle\sum\limits_{i=2}^{m-1}}
{\displaystyle\sum\limits_{j=1}^{i}}
\left[  j\right]  _{q^{-1}}\\
&  \leq\left(  1-q^{-1}\right)  \dfrac{m\left(  m-1\right)  ^{2}}{2},
\end{align*}
we get for $1\leq q\leq q_{0}$ and $\left\vert z\right\vert \leq r$,%
\begin{align*}
&  \left\vert L_{q}\left(  f;z\right)  -L_{1}\left(  f;z\right)  \right\vert
\\
&  \leq%
{\displaystyle\sum\limits_{m=2}^{N-1}}
\left\vert a_{m}\right\vert \left\vert q%
{\displaystyle\sum\limits_{i=1}^{m-1}}
\left[  i\right]  _{q}-\dfrac{m\left(  m-1\right)  }{2}\right\vert \left\vert
z^{m-1}-z^{m}\right\vert +%
{\displaystyle\sum\limits_{m=N}^{\infty}}
\left\vert a_{m}\right\vert \left\vert q%
{\displaystyle\sum\limits_{i=1}^{m-1}}
\left[  i\right]  _{q}-\dfrac{m\left(  m-1\right)  }{2}\right\vert \left\vert
z^{m-1}-z^{m}\right\vert \\
&  +%
{\displaystyle\sum\limits_{m=2}^{N-1}}
\left\vert a_{m}\right\vert \left\vert
{\displaystyle\sum\limits_{i=1}^{m-1}}
\left[  i\right]  _{q^{-1}}-\dfrac{m\left(  m-1\right)  }{2}\right\vert
\left\vert z^{m-1}-z^{m}\right\vert +%
{\displaystyle\sum\limits_{m=N}^{\infty}}
\left\vert a_{m}\right\vert \left\vert
{\displaystyle\sum\limits_{i=1}^{m-1}}
\left[  i\right]  _{q^{-1}}-\dfrac{m\left(  m-1\right)  }{2}\right\vert
\left\vert z^{m-1}-z^{m}\right\vert \\
&  \leq\left(  q-1\right)
{\displaystyle\sum\limits_{m=2}^{N-1}}
\left\vert a_{m}\right\vert m^{2}\left(  m-1\right)  q_{0}^{m-1}r^{m}+4%
{\displaystyle\sum\limits_{m=N}^{\infty}}
\left\vert a_{m}\right\vert \left(  m-1\right)  ^{2}q_{0}^{m}r^{m}\\
&  +\left(  1-q^{-1}\right)
{\displaystyle\sum\limits_{m=2}^{N-1}}
\left\vert a_{m}\right\vert m\left(  m-1\right)  ^{2}r^{m}+2%
{\displaystyle\sum\limits_{m=N}^{\infty}}
\left\vert a_{m}\right\vert m\left(  m-1\right)  r^{m}.
\end{align*}
Since $f\in H\left(  \mathbb{D}_{R}\right)  ,$ we can find $N=N_{\varepsilon
}\in\mathbb{N}$ such that%
\[
4%
{\displaystyle\sum\limits_{m=N}^{\infty}}
\left\vert a_{m}\right\vert \left(  m-1\right)  ^{2}q_{0}^{m}r^{m}+2%
{\displaystyle\sum\limits_{m=N}^{\infty}}
\left\vert a_{m}\right\vert m\left(  m-1\right)  r^{m}<\varepsilon/2.
\]
Thus for $q$ sufficiently close to $1$ from the right, we conclude that%
\[
\lim\limits_{q\rightarrow1^{+1}}L_{q}\left(  f;z\right)  =L_{1}\left(
f;z\right)
\]
uniformly on $\mathbb{D}_{r}$. The proof is finished.
\end{proof}

\begin{proof}
[Proof of Theorem \ref{t:saturation}]Then by Theorem \ref{t:voronovskaja}, we
get $L_{q}\left(  f;z\right)  =\lim_{n\rightarrow\infty}\left[  n+1\right]
_{q}\left(  U_{n,q}\left(  f;z\right)  -f\left(  z\right)  \right)  =0$ for
infinite number of points having an accumulation point on $\mathbb{D}_{R/q}$.
Since $L_{q}\left(  f;z\right)  \in H\left(  \mathbb{D}_{R/q}\right)  $, by
the Unicity Theorem for analytic functions we get $L_{q}\left(  f;z\right)
=0$ in $\mathbb{D}_{R/q}$, and therefore, by (\ref{lq}), $a_{m}=0$,
$m=2,3,...$ Thus, $f$ is linear. Theorem \ref{t:saturation} is proved.
\end{proof}

\end{document}